\numberwithin{equation}{section}
\numberwithin{figure}{section}
\numberwithin{table}{section}
\newenvironment{renum}{
  \begin{enumerate}[label=\textup{(\roman*)}]
}{
  \end{enumerate}
}
\theoremstyle{plain}
\newtheorem{thm}[subsection]{Theorem}
\newtheorem*{thm*}{Theorem}
\newtheorem{cor}[subsection]{Corollary}
\newtheorem{lem}[subsection]{Lemma}
\newtheorem{pro}[subsection]{Proposition}
\newtheorem{thmx}{Theorem}
\newtheorem{conjx}[thmx]{Conjecture}
\theoremstyle{definition}
\newtheorem{exa}[subsection]{Example}
\newtheorem{dfn}[subsection]{Definition}
\theoremstyle{remark}
\newtheorem{remark}[subsection]{Remark}
\newcommand\no{n\textsuperscript{o}}   % Bourbaki numeros
\newcommand{\bb}[1]{\mathbb{#1}}
\newcommand\NN{\bb{N}}
\newcommand\ZZ{\bb{Z}}
\newcommand\QQ{\bb{Q}}
\newcommand\CC{\bb{C}}
\newcommand\RR{\bb{R}}
\newcommand\HH{\bb{H}}
\DeclareMathAlphabet{\mathitbf}{OML}{cmm}{b}{it}
\newcommand{\bo}[1]{\mathitbf{#1}}
\newcommand{\ca}[1]{\mathcal{#1}}
\DeclareMathAlphabet{\mathpzc}{OT1}{pzc}{m}{it}
\let\epsilon\varepsilon
\let\phi\varphi
\let\theta\vartheta
\newcommand\eps\epsilon
\newcommand{\argu}{\hbox to 1.5ex{\hrulefill}}
\newcommand{\image}{\operatorname{im}}
\newcommand{\dbl}{\llbracket}
\newcommand{\dbr}{\rrbracket}
\newcommand{\CX}{\ensuremath{\mathpzc{X}}}
\newcommand{\CM}{\ensuremath{\mathpzc{M}}}
\newcommand{\iden}{\operatorname{id}}
\newcommand{\GL}{\operatorname{GL}}
\DeclareMathOperator{\Cay}{Cay}
\newcommand{\CoxBullet}{\bullet}
\newcommand{\CircuitN}{%
\begin{xy} 0;<0.7cm,0cm>: 
(0,0)*=0{\CoxBullet}="1" , 
(1,0)*=0{\CoxBullet}="2" , 
(2,0)*=0{\CoxBullet}="3" ,
(3,0)*=0{\CoxBullet}="4" , 
(1.5,1)*=0{\CoxBullet}="5" , 
\ar@{-} "1";"2",
\ar@{..} "2";"3"
\ar@{-} "3";"4"
\ar@{-} "4";"5"
\ar@{-}"5";"1"
\end{xy}
}
\newcommand{\Linn}[2]{%
\begin{xy} 0;<0.7cm,0cm>: 
(0,0)*=0{\CoxBullet}="1" , 
(1,0)*=0{\CoxBullet}="2" , 
(2,0)*=0{\CoxBullet}="3" ,
(3,0)*=0{\CoxBullet}="4" , 
(4,0)*=0{\CoxBullet}="5" , 
(5,0)*=0{\CoxBullet}="6" ,
\ar@{-}^{#1} "1";"2",
\ar@{-} "2";"3"
\ar@{..} "3";"4"
\ar@{-} "4";"5"
\ar@{-}^{#2} "5";"6"
\end{xy}
}
\newcommand{\Lintwo}[1]{%
\begin{xy} 0;<0.7cm,0cm>: 
(0,0)*=0{\CoxBullet}="1" , 
(1,0)*=0{\CoxBullet}="2" , 
\ar@{-}^{#1} "1";"2",
\end{xy}
}
\newcommand{\LinT}[2]{%
\begin{xy} 0;<0.7cm,0cm>: 
(0,0)*=0{\CoxBullet}="1" , 
(1,0)*=0{\CoxBullet}="2" , 
(2,0)*=0{\CoxBullet}="3" ,
\ar@{-}^{#1} "1";"2",
\ar@{-}^{#2} "2";"3"
\end{xy}
}
\newcommand{\Tri}[3]{%
\begin{xy} 0;<0.7cm,0cm>: 
(0,0)*=0{\CoxBullet}="1" , 
(1,0)*=0{\CoxBullet}="2" , 
(0.5,0.866)*=0{\CoxBullet}="3" ,
\ar@{-}^{#1} "1";"2",
\ar@{-}_{#2} "2";"3",
\ar@{-}_{#3} "3";"1",
\end{xy}
}
\newcommand{\LinQ}[3] 
{\begin{xy} 0;<0.7cm,0cm>:
(0,0)*=0{\CoxBullet}="1" , 
(1,0)*=0{\CoxBullet}="2" , 
(2,0)*=0{\CoxBullet}="3" ,
(3,0)*=0{\CoxBullet}="4",
\ar@{-}^{#1} "1";"2",
\ar@{-}^{#2} "2";"3",
\ar@{-}^{#3} "3";"4"
\end{xy}}
\newcommand{\LinP}[4] 
{\begin{xy} 0;<0.7cm,0cm>:
(0,0)*=0{\CoxBullet}="1" , 
(1,0)*=0{\CoxBullet}="2" , 
(2,0)*=0{\CoxBullet}="3" ,
(3,0)*=0{\CoxBullet}="4",
(4,0)*=0{\CoxBullet}="5",
\ar@{-}^{#1} "1";"2",
\ar@{-}^{#2} "2";"3",
\ar@{-}^{#3} "3";"4",
\ar@{-}^{#4} "4";"5"
\end{xy}}
\newcommand{\LinS}[5] 
{\begin{xy} 0;<0.7cm,0cm>:
(0,0)*=0{\CoxBullet}="1" , 
(1,0)*=0{\CoxBullet}="2" , 
(2,0)*=0{\CoxBullet}="3" ,
(3,0)*=0{\CoxBullet}="4",
(4,0)*=0{\CoxBullet}="5",
(5,0)*=0{\CoxBullet}="6",
\ar@{-}^{#1} "1";"2",
\ar@{-}^{#2} "2";"3",
\ar@{-}^{#3} "3";"4",
\ar@{-}^{#4} "4";"5",
\ar@{-}^{#5} "5";"6"
\end{xy}}
\newcommand{\QFork}[3] 
{\begin{xy} 0;<0.7cm,0cm>:
(0,0)*=0{\CoxBullet}="1" , 
(1,0)*=0{\CoxBullet}="2" , 
(1.866,0.5)*=0{\CoxBullet}="3" ,
(1.866,-0.5)*=0{\CoxBullet}="4",
\ar@{-}^{#1} "1";"2",
\ar@{-}^{#2} "2";"3",
\ar@{-}_{#3} "2";"4"
\end{xy}}
\newcommand{\PFork}[3] 
{\begin{xy} 0;<0.7cm,0cm>:
(0,0)*=0{\CoxBullet}="1" , 
(1,0)*=0{\CoxBullet}="2" , 
(2,0)*=0{\CoxBullet}="3" , 
(2.866,0.5)*=0{\CoxBullet}="4" ,
(2.866,-0.5)*=0{\CoxBullet}="5",
\ar@{-}^{#1} "1";"2",
\ar@{-}^{#2} "2";"3",
\ar@{-}^{#3} "3";"4",
\ar@{-} "3";"5"
\end{xy}}
\newcommand{\SFork}[1] 
{\begin{xy} 0;<0.7cm,0cm>:
(0,0)*=0{\CoxBullet}="1" , 
(1,0)*=0{\CoxBullet}="2" , 
(2,0)*=0{\CoxBullet}="3" , 
(3,0)*=0{\CoxBullet}="4" , 
(3.866,0.5)*=0{\CoxBullet}="5" ,
(3.866,-0.5)*=0{\CoxBullet}="6",
\ar@{-} "1";"2",
\ar@{-}^{#1} "2";"3",
\ar@{-} "3";"4",
\ar@{-} "4";"5",
\ar@{-} "4";"6"
\end{xy}}
\newcommand{\nFork}[1]{%
\begin{xy} 0;<0.7cm,0cm>:
(0,0)*=0{\CoxBullet}="1" , 
(1,0)*=0{\CoxBullet}="2" , 
(2,0)*=0{\CoxBullet}="3" , 
(3,0)*=0{\CoxBullet}="4" , 
(4,0)*=0{\CoxBullet}="5" , 
(4.866,0.5)*=0{\CoxBullet}="6" ,
(4.866,-0.5)*=0{\CoxBullet}="7",
\ar@{-}^{#1} "1";"2",
\ar@{-} "2";"3",
\ar@{.} "3";"4",
\ar@{-} "4";"5",
\ar@{-} "5";"6",
\ar@{-} "5";"7"
\end{xy}}
\newcommand{\nDoubleFork}{%
\begin{xy} 0;<0.7cm,0cm>:
(0,-0.5)*=0{\CoxBullet}="1" , 
(0,0.5)*=0{\CoxBullet}="2" , 
(0.866,0)*=0{\CoxBullet}="3" , 
(1.866,0)*=0{\CoxBullet}="4" , 
(2.866,0)*=0{\CoxBullet}="5" , 
(3.866,0)*=0{\CoxBullet}="6" , 
(4.732,0.5)*=0{\CoxBullet}="7" ,
(4.732,-0.5)*=0{\CoxBullet}="8",
\ar@{-} "1";"3",
\ar@{-} "2";"3",
\ar@{-} "3";"4",
\ar@{.} "4";"5",
\ar@{-} "5";"6",
\ar@{-} "6";"7"
\ar@{-} "6";"8"
\end{xy}}
\newcommand{\Quad}[4]
{\begin{xy} 0;<0.7cm,0cm>:
(0,0)*=0{\CoxBullet}="1" , 
(1,0)*=0{\CoxBullet}="2" , 
(1,1)*=0{\CoxBullet}="3" ,
(0,1)*=0{\CoxBullet}="4",
\ar@{-}^{#1} "1";"2",
\ar@{-}_{#2} "2";"3",
\ar@{-}_{#3} "3";"4",
\ar@{-}_{#4} "4";"1"
\end{xy}}
\newcommand{\Pent}[5]
{\begin{xy} 0;<0.7cm,0cm>:
(0,0)*=0{\CoxBullet}="1" , 
(1,0)*=0{\CoxBullet}="2" , 
(1.5,0.866)*=0{\CoxBullet}="3" ,
(0.5,0.866)*=0{\CoxBullet}="4",
(-0.5,0.866)*=0{\CoxBullet}="5",
\ar@{-}^{#1} "1";"2",
\ar@{-}_{#2} "2";"3",
\ar@{-}_{#3} "3";"4",
\ar@{-}_{#4} "4";"5",
\ar@{-}_{#5} "5";"1",
\end{xy}}
\newcommand{\QELoop}[1] 
{\begin{xy} 0;<0.7cm,0cm>:
(0,0)*=0{\CoxBullet}="1" , 
(1,0)*=0{\CoxBullet}="2" , 
(1.866,0.5)*=0{\CoxBullet}="3" ,
(1.866,-0.5)*=0{\CoxBullet}="4",
\ar@{-}^{#1} "1";"2",
\ar@{-} "2";"3",
\ar@{-} "2";"4",
\ar@{-} "3";"4"
\end{xy}}
\newcommand{\PELoop}[1] 
{\begin{xy} 0;<0.7cm,0cm>:
(0,0)*=0{\CoxBullet}="1" , 
(1,0)*=0{\CoxBullet}="2" , 
(1.866,0.5)*=0{\CoxBullet}="3" ,
(1.866,-0.5)*=0{\CoxBullet}="4",
(2.732,0)*=0{\CoxBullet}="5",
\ar@{-}^{#1} "1";"2",
\ar@{-} "2";"3",
\ar@{-} "2";"4",
\ar@{-} "4";"5",
\ar@{-} "3";"5"
\end{xy}}
\newcommand{\SELoop}
{\begin{xy} 0;<0.7cm,0cm>:
(0,0)*=0{\CoxBullet}="1" , 
(1,0)*=0{\CoxBullet}="2" , 
(1.866,0.5)*=0{\CoxBullet}="3" ,
(1.866,-0.5)*=0{\CoxBullet}="4",
(2.866,0.5)*=0{\CoxBullet}="5",
(2.866,-0.5)*=0{\CoxBullet}="6",
\ar@{-} "1";"2",
\ar@{-} "2";"3",
\ar@{-} "3";"5",
\ar@{-} "5";"6",
\ar@{-} "6";"4",
\ar@{-} "4";"2"
\end{xy}}
\newcommand{\HELoop}
{\begin{xy} 0;<0.7cm,0cm>:
(0,0)*=0{\CoxBullet}="1" , 
(1,0)*=0{\CoxBullet}="2" , 
(1.866,0.5)*=0{\CoxBullet}="3" ,
(1.866,-0.5)*=0{\CoxBullet}="4",
(2.866,0.5)*=0{\CoxBullet}="5",
(2.866,-0.5)*=0{\CoxBullet}="6",
(3.732,0)*=0{\CoxBullet}="7",
\ar@{-} "1";"2",
\ar@{-} "2";"3",
\ar@{-} "3";"5",
\ar@{-} "5";"7",
\ar@{-} "7";"6",
\ar@{-} "6";"4",
\ar@{-} "4";"2",
\end{xy}}
\newcommand{\OELoop}
{\begin{xy} 0;<0.7cm,0cm>:
(0,0)*=0{\CoxBullet}="1" , 
(1,0)*=0{\CoxBullet}="2" , 
(1.866,0.5)*=0{\CoxBullet}="3" ,
(1.866,-0.5)*=0{\CoxBullet}="4",
(2.866,0.5)*=0{\CoxBullet}="5",
(2.866,-0.5)*=0{\CoxBullet}="6",
(3.866,0.5)*=0{\CoxBullet}="7",
(3.866,-0.5)*=0{\CoxBullet}="8",
\ar@{-} "1";"2",
\ar@{-} "2";"3",
\ar@{-} "3";"5",
\ar@{-} "5";"7",
\ar@{-} "7";"8"
\ar@{-} "8";"6",
\ar@{-} "6";"4",
\ar@{-} "4";"2"
\end{xy}}
\newcommand{\NELoop}
{\begin{xy} 0;<0.7cm,0cm>:
(0,0)*=0{\CoxBullet}="1" , 
(1,0)*=0{\CoxBullet}="2" , 
(1.866,0.5)*=0{\CoxBullet}="3" ,
(1.866,-0.5)*=0{\CoxBullet}="4",
(2.866,0.5)*=0{\CoxBullet}="5",
(2.866,-0.5)*=0{\CoxBullet}="6",
(3.866,0.5)*=0{\CoxBullet}="7",
(3.866,-0.5)*=0{\CoxBullet}="8",
(4.732,0)*=0{\CoxBullet}="9",
\ar@{-} "1";"2",
\ar@{-} "2";"3",
\ar@{-} "3";"5",
\ar@{-} "5";"7",
\ar@{-} "7";"9",
\ar@{-} "9";"8",
\ar@{-} "8";"6",
\ar@{-} "6";"4",
\ar@{-} "4";"2",
\end{xy}}
\newcommand{\Tetra}
{\begin{xy} 0;<0.7cm,0cm>:
(0,0)*=0{\CoxBullet}="1" , 
(0,1)*=0{\CoxBullet}="2" , 
(-0.866,-0.5)*=0{\CoxBullet}="3" ,
(0.866,-0.5)*=0{\CoxBullet}="4",
\ar@{-} "1";"2",
\ar@{-} "2";"3",
\ar@{-} "2";"4",
\ar@{-} "3";"4",
\ar@{-} "1";"4",
\ar@{-} "3";"1"
\end{xy}}
\newcommand{\BigQ}
{\begin{xy} 0;<0.7cm,0cm>:
(0,0)*=0{\CoxBullet}="1" , 
(0.707,0.707)*=0{\CoxBullet}="2" , 
(-0.707,0.707)*=0{\CoxBullet}="3" ,
(-0.707,-0.707)*=0{\CoxBullet}="4",
(0.707,-0.707)*=0{\CoxBullet}="5",
\ar@{-} "1";"2",
\ar@{-} "2";"3",
\ar@{-} "3";"4",
\ar@{-} "4";"5",
\ar@{-} "5";"2",
\ar@{-} "4";"1"
\end{xy}}
\newcommand{\EsixL}[2]{%
\begin{xy} 0;<0.7cm,0cm>:
(0,0)*=0{\CoxBullet}="1" , 
(1,0)*=0{\CoxBullet}="2" , 
(2,0)*=0{\CoxBullet}="3" ,
(3,0)*=0{\CoxBullet}="4",
(4,0)*=0{\CoxBullet}="5",
(2,1)*=0{\CoxBullet}="6",
\ar@{-}^{#1} "1";"2",
\ar@{-} "2";"3",
\ar@{-} "3";"4",
\ar@{-}^{#2} "4";"5",
\ar@{-} "3";"6"
\end{xy}}
\newcommand{\EsevenL}[1]{%
\begin{xy} 0;<0.7cm,0cm>:
(0,0)*=0{\CoxBullet}="1" , 
(1,0)*=0{\CoxBullet}="2" , 
(2,0)*=0{\CoxBullet}="3" ,
(3,0)*=0{\CoxBullet}="4",
(4,0)*=0{\CoxBullet}="5",
(2,1)*=0{\CoxBullet}="6",
(5,0)*=0{\CoxBullet}="7",
\ar@{-} "1";"2",
\ar@{-} "2";"3",
\ar@{-} "3";"4",
\ar@{-} "4";"5",
\ar@{-} "3";"6"
\ar@{-}^{#1} "5";"7",
\end{xy}}
\newcommand{\EeightL}[1]{%
\begin{xy} 0;<0.7cm,0cm>:
(0,0)*=0{\CoxBullet}="1" , 
(1,0)*=0{\CoxBullet}="2" , 
(2,0)*=0{\CoxBullet}="3" ,
(3,0)*=0{\CoxBullet}="4",
(4,0)*=0{\CoxBullet}="5",
(2,1)*=0{\CoxBullet}="6",
(5,0)*=0{\CoxBullet}="7",
(6,0)*=0{\CoxBullet}="8",
\ar@{-} "1";"2",
\ar@{-} "2";"3",
\ar@{-} "3";"4",
\ar@{-} "4";"5",
\ar@{-} "3";"6",
\ar@{-} "5";"7",
\ar@{-}^{#1} "7";"8",
\end{xy}}
\newcommand{\TEeightL}[1]{%
\begin{xy} 0;<0.7cm,0cm>:
(0,0)*=0{\CoxBullet}="1" , 
(1,0)*=0{\CoxBullet}="2" , 
(2,0)*=0{\CoxBullet}="3" ,
(3,0)*=0{\CoxBullet}="4",
(4,0)*=0{\CoxBullet}="5",
(5,0)*=0{\CoxBullet}="6",
(6,0)*=0{\CoxBullet}="7",
(7,0)*=0{\CoxBullet}="8",
(2,1)*=0{\CoxBullet}="9",
\ar@{-} "1";"2",
\ar@{-} "2";"3",
\ar@{-} "3";"4",
\ar@{-} "4";"5",
\ar@{-} "5";"6",
\ar@{-} "6";"7",
\ar@{-}^{#1} "7";"8"
\ar@{-} "3";"9",
\end{xy}}
\newcommand{\TTEsix}{%
\begin{xy} 0;<0.7cm,0cm>:
(0,0)*=0{\CoxBullet}="1" , 
(1,0)*=0{\CoxBullet}="2" , 
(2,0)*=0{\CoxBullet}="3" ,
(3,0)*=0{\CoxBullet}="4",
(3.866,0.5)*=0{\CoxBullet}="5",
(4.866,0.5)*=0{\CoxBullet}="6",
(3.866,-0.5)*=0{\CoxBullet}="7",
(4.866,-0.5)*=0{\CoxBullet}="8",
\ar@{-} "1";"2",
\ar@{-} "2";"3",
\ar@{-} "3";"4",
\ar@{-} "4";"5",
\ar@{-} "5";"6",
\ar@{-} "4";"7",
\ar@{-} "7";"8",
\end{xy}}
\newcommand{\TEsix}{%
\begin{xy} 0;<0.7cm,0cm>:
(0,0)*=0{\CoxBullet}="1" , 
(1,0)*=0{\CoxBullet}="2" , 
(2,0)*=0{\CoxBullet}="3" ,
(2.866,0.5)*=0{\CoxBullet}="4",
(3.866,0.5)*=0{\CoxBullet}="5",
(2.866,-0.5)*=0{\CoxBullet}="6",
(3.866,-0.5)*=0{\CoxBullet}="7",
\ar@{-} "1";"2",
\ar@{-} "2";"3",
\ar@{-} "3";"4",
\ar@{-} "4";"5",
\ar@{-} "3";"6",
\ar@{-} "6";"7",
\end{xy}}
\newcommand{\TTEseven}{%
\begin{xy} 0;<0.7cm,0cm>:
(0,0)*=0{\CoxBullet}="1" , 
(1,0)*=0{\CoxBullet}="2" , 
(2,0)*=0{\CoxBullet}="3" ,
(3,0)*=0{\CoxBullet}="4",
(4,0)*=0{\CoxBullet}="5",
(5,0)*=0{\CoxBullet}="6",
(6,0)*=0{\CoxBullet}="7",
(7,0)*=0{\CoxBullet}="8",
(3,1)*=0{\CoxBullet}="9",
\ar@{-} "1";"2",
\ar@{-} "2";"3",
\ar@{-} "3";"4",
\ar@{-} "4";"5",
\ar@{-} "5";"6",
\ar@{-} "6";"7",
\ar@{-} "7";"8",
\ar@{-} "4";"9",
\end{xy}}
\newcommand{\TEseven}{%
\begin{xy} 0;<0.7cm,0cm>:
(0,0)*=0{\CoxBullet}="1" , 
(1,0)*=0{\CoxBullet}="2" , 
(2,0)*=0{\CoxBullet}="3" ,
(3,0)*=0{\CoxBullet}="4",
(4,0)*=0{\CoxBullet}="5",
(5,0)*=0{\CoxBullet}="6",
(6,0)*=0{\CoxBullet}="7",
(2,1)*=0{\CoxBullet}="8",
\ar@{-} "1";"2",
\ar@{-} "2";"3",
\ar@{-} "3";"4",
\ar@{-} "4";"5",
\ar@{-} "5";"6",
\ar@{-} "6";"7",
\ar@{-} "3";"8",
\end{xy}}
\newcommand{\TTEeightL}[1]{%
\begin{xy} 0;<0.7cm,0cm>:
(0,0)*=0{\CoxBullet}="1" , 
(1,0)*=0{\CoxBullet}="2" , 
(2,0)*=0{\CoxBullet}="3" ,
(3,0)*=0{\CoxBullet}="4",
(4,0)*=0{\CoxBullet}="5",
(5,0)*=0{\CoxBullet}="6",
(6,0)*=0{\CoxBullet}="7",
(7,0)*=0{\CoxBullet}="8",
(8,0)*=0{\CoxBullet}="9",
(2,1)*=0{\CoxBullet}="10",
\ar@{-} "1";"2",
\ar@{-} "2";"3",
\ar@{-} "3";"4",
\ar@{-} "4";"5",
\ar@{-} "5";"6",
\ar@{-} "6";"7",
\ar@{-} "7";"8",
\ar@{-}^{#1} "8";"9"
\ar@{-} "3";"10",
\end{xy}}
\newcommand{\TDfour}[1]
{\begin{xy} 0;<0.7cm,0cm>:
(0,0)*=0{\CoxBullet}="1" , 
(1,0)*=0{\CoxBullet}="2" , 
(0,1)*=0{\CoxBullet}="3" ,
(-1,0)*=0{\CoxBullet}="4",
(0,-1)*=0{\CoxBullet}="5",
\ar@{-}^{#1} "4";"1",
\ar@{-} "1";"3",
\ar@{-} "1";"2",
\ar@{-} "1";"5",
\end{xy}}
\newcommand{\TTDfour}[1]{%
\begin{xy} 0;<0.7cm,0cm>:
(0,0)*=0{\CoxBullet}="1" , 
(1,0)*=0{\CoxBullet}="2" , 
(2,0)*=0{\CoxBullet}="3" ,
(3,0)*=0{\CoxBullet}="4",
(2,-1)*=0{\CoxBullet}="5",
(2,1)*=0{\CoxBullet}="6",
\ar@{-}^{#1} "1";"2",
\ar@{-} "2";"3",
\ar@{-} "3";"4",
\ar@{-} "3";"5",
\ar@{-} "3";"6"
\end{xy}}
\newcommand{\TTDfive}{%
\begin{xy} 0;<0.7cm,0cm>:
(0,0)*=0{\CoxBullet}="1" , 
(1,0)*=0{\CoxBullet}="2" , 
(2,0)*=0{\CoxBullet}="3" ,
(3,0)*=0{\CoxBullet}="4",
(4,0)*=0{\CoxBullet}="5",
(1,1)*=0{\CoxBullet}="6",
(2,1)*=0{\CoxBullet}="7",
\ar@{-} "1";"2",
\ar@{-} "2";"3",
\ar@{-} "3";"4",
\ar@{-} "4";"5",
\ar@{-} "2";"6",
\ar@{-} "3";"7",
\end{xy}}
\newcommand{\TTDsix}{%
\begin{xy} 0;<0.7cm,0cm>:
(0,0)*=0{\CoxBullet}="1" , 
(1,0)*=0{\CoxBullet}="2" , 
(2,0)*=0{\CoxBullet}="3" ,
(3,0)*=0{\CoxBullet}="4",
(4,0)*=0{\CoxBullet}="5",
(5,0)*=0{\CoxBullet}="6",
(1,1)*=0{\CoxBullet}="7",
(3,1)*=0{\CoxBullet}="8",
\ar@{-} "1";"2",
\ar@{-} "2";"3",
\ar@{-} "3";"4",
\ar@{-} "4";"5",
\ar@{-} "5";"6",
\ar@{-} "2";"7",
\ar@{-} "4";"8",
\end{xy}}
\newcommand{\TTDseven}{%
\begin{xy} 0;<0.7cm,0cm>:
(0,0)*=0{\CoxBullet}="1" , 
(1,0)*=0{\CoxBullet}="2" , 
(2,0)*=0{\CoxBullet}="3" ,
(3,0)*=0{\CoxBullet}="4",
(4,0)*=0{\CoxBullet}="5",
(5,0)*=0{\CoxBullet}="6",
(6,0)*=0{\CoxBullet}="7",
(1,1)*=0{\CoxBullet}="8",
(4,1)*=0{\CoxBullet}="9",
\ar@{-} "1";"2",
\ar@{-} "2";"3",
\ar@{-} "3";"4",
\ar@{-} "4";"5",
\ar@{-} "5";"6",
\ar@{-} "6";"7",
\ar@{-} "2";"8",
\ar@{-} "5";"9",
\end{xy}}
\newcommand{\TTDeight}{%
\begin{xy} 0;<0.7cm,0cm>:
(0,0)*=0{\CoxBullet}="1" , 
(1,0)*=0{\CoxBullet}="2" , 
(2,0)*=0{\CoxBullet}="3" ,
(3,0)*=0{\CoxBullet}="4",
(4,0)*=0{\CoxBullet}="5",
(5,0)*=0{\CoxBullet}="6",
(6,0)*=0{\CoxBullet}="7",
(7,0)*=0{\CoxBullet}="8",
(1,1)*=0{\CoxBullet}="9",
(5,1)*=0{\CoxBullet}="10",
\ar@{-} "1";"2",
\ar@{-} "2";"3",
\ar@{-} "3";"4",
\ar@{-} "4";"5",
\ar@{-} "5";"6",
\ar@{-} "6";"7",
\ar@{-} "7";"8",
\ar@{-} "2";"9",
\ar@{-} "6";"10",
\end{xy}}
\newcommand{\PStar}
{\begin{xy} 0;<0.7cm,0cm>:
(0,0)*=0{\CoxBullet}="1" , 
(0,1)*=0{\CoxBullet}="2" , 
a(162)*=0{\CoxBullet}="3" ,
a(234)*=0{\CoxBullet}="4",
a(306)*=0{\CoxBullet}="5",
a(18)*=0{\CoxBullet}="6",
\ar@{-} "1";"2",
\ar@{-} "1";"3",
\ar@{-} "1";"4",
\ar@{-} "1";"5",
\ar@{-} "1";"6"
\end{xy}}
\newcommand{\SixLoop}[2]
{\begin{xy} 0;<0.7cm,0cm>:
(0,0)*=0{\CoxBullet}="1" , 
(0.866,0.5)*=0{\CoxBullet}="2" , 
(0.866,-0.5)*=0{\CoxBullet}="3" ,
(1.732,0.5)*=0{\CoxBullet}="4",
(1.732,-0.5)*=0{\CoxBullet}="5",
(2.732,0)*=0{\CoxBullet}="6",
\ar@{-} "1";"2",
\ar@{-}^{#2} "2";"4",
\ar@{-} "4";"6",
\ar@{-} "6";"5",
\ar@{-}_{#1} "5";"3",
\ar@{-} "3";"1",
\end{xy}}
\newcommand{\CQuad}[5]{%
\begin{xy} 0;<0.7cm,0cm>:
(0,0)*=0{\CoxBullet}="1" , 
(1,0)*=0{\CoxBullet}="2" , 
(1,1)*=0{\CoxBullet}="3" ,
(0,1)*=0{\CoxBullet}="4",
\ar@{-}^{#1} "1";"2",
\ar@{-}^{#2} "2";"3",
\ar@{-}^{#3} "3";"4",
\ar@{-}^{#4} "4";"1",
\ar@{-}|{#5} "1";"3"
\end{xy}}
\newcommand{\Diagrambox}[3]{\raisebox{0pt}[#1][#2]{#3}}
\newcommand{\CoxGrUIV}{%
  \begin{xy} 0;<0.7cm,0cm>:
    (0,0)*=0{\CoxBullet}="1" , 
    (0,1)*=0{\CoxBullet}="2" , 
    (-0.866,-0.5)*=0{\CoxBullet}="3" ,
    (0.866,-0.5)*=0{\CoxBullet}="4",
    \ar@{-}|{\infty} "1";"2",
    \ar@{-}|{\infty} "2";"3",
    \ar@{-}|{\infty} "2";"4",
    \ar@{-}|{\infty} "3";"4",
    \ar@{-}|{\infty} "1";"4",
    \ar@{-}|{\infty} "3";"1"
  \end{xy}
}
\newcommand{\CoxGrSItwo}[1]{\Lintwo{m}}
\newcommand{\CoxGrHCI}[2]{\LinT{#1}{#2}}
\newcommand{\CoxGrHNCLVI}{\Diagrambox{27pt}{6pt}{\TTEeightL{}}}
\title{On the growth of a Coxeter group}
\author{T. Terragni} 
\date{\today}
\DeclareMathOperator{\ord}{ord}
\DeclareMathOperator{\cleq}{\preceq}
\newcommand{\WS}{(W,S)}
\newcommand{\WSp}{(W',S')}
\newcommand{\pWS}{p_{\WS}(t)}
\newcommand{\WI}{(W_I,I)}
\newcommand{\pWI}{p_{\WI}(t)}
\newcommand{\spr}{\ca{F}}
\newcommand{\caR}{\ca{R}}
\newcommand{\bow}{\bo{w}}
\newcommand{\bov}{\bo{v}}
\newcommand{\bou}{\bo{u}}
\DeclareMathOperator{\PGL}{PGL}
\newcommand{\so}{s_0}
\newcommand{\ro}{r_0}
\begin{document}
\maketitle
\footnotetext{{\it Email:} \href{mailto:tom.terragni@gmail.com}{\texttt{tom.terragni@gmail.com}}{\it, Web:} \url{https://sites.google.com/site/tomterragni}.}

%%%%%%%%%%%%%%%%%%%%%%%%%%%%%%%%%%% 
%% Abstract
\begin{abstract}
For a Coxeter system $\WS$ let $a_n^{\WS}$ be the cardinality of the sphere of radius $n$ in the Cayley graph of $W$ with respect to the standard generating set $S$. 
It is shown that, if $\WS\cleq\WSp$ then $a_n^{\WS}\leq a_n^{\WSp}$ for all $n\in \NN_0$, where $\cleq$ is a suitable partial order on Coxeter systems (cf. Thm.~\ref{thm:an}). 

It is proven that there exists a constant $\tau= 1.13\dots$ such that for any non-affine, non-spherical Coxeter system $\WS$ the growth rate $\omega\WS=\limsup \sqrt[n]{a_n}$ satisfies $\omega\WS\geq \tau$ (cf.~Thm.~\ref{thm:tau}). 
The constant $\tau$ is a Perron number of degree $127$ over $\QQ$.

For a Coxeter group $W$ the Coxeter generating set is not unique (up to $W$-conjugacy), but there is a standard procedure, the \emph{diagram twisting} 
(cf.~\cite{brady-etal--rcgag}), which allows one to pass from one Coxeter generating set $S$ to another Coxeter generating set $\mu(S)$.
A generalisation of the diagram twisting is introduced, the \emph{mutation}, and it is proven that Poincar\'e series are invariant under mutations (cf.~Thm.~\ref{thm:mu}).

\medskip
\noindent\textbf{2010 MSC:} 20F55 (Primary), 20F32, 05C25 (Secondary).

\smallskip
\noindent\textbf{Keywords:} Coxeter groups, growth of groups.
\end{abstract}

%%%%%%%%%%%%%%%%%%%%%%%%%%%%%%%%%%% 
%% Body
\section*{Introduction}
The growth of finitely generated groups has been the subject of intensive investigations (cf.~\cite{grigorchuk--bppg,grigorchuk--dgfggtim}, \cite{grigorchuk-delaharpe--prgesgt}, \cite{delaharpe--tggt}) and led to ground-breaking results, e.g., M.~Gromov showed that a finitely generated group has polynomial growth if, and only if, it is virtually nilpotent (cf.~\cite{gromov--gpgem}).

For a group $G$ being generated by a finite symmetric set $X\subseteq G$ not containing the identity $1\in G$, the growth rate\footnote{The growth rate is often called \emph{exponential growth rate}.} is defined by $\omega(G,X)=\limsup_n \sqrt[n]{a_n}$, where $a_n$ is the number of elements in $G$ which can be written as a product of $n$ elements in $X$ but which cannot be written as a product of less than $n$ elements in $X$. 
If $G$ is of subexponential growth, i.e., polynomial or intermediate growth, then $\omega(G,X)\leq 1$. 

The set of isomorphism classes of Coxeter systems admits a partial order $\cleq$, and the corresponding monotonicity result for growth sequences is proven.
\begin{thm*}[\ref{thm:an}]
Let $\WS$ and $\WSp$ be Coxeter systems. If $\WS\cleq \WSp$ then $a_n^{\WS}\leq a_n^{\WSp}$ for all $n\in \NN_0$.
\end{thm*}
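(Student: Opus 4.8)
The plan is to reduce the statement to two elementary moves that generate the order $\cleq$ and to construct, for each, a length-preserving injection between radius-$n$ spheres. Write $(W,S)=(W(M),S)$ for $M=(m_{st})$. Unwinding the defining injection $\iota\colon S\hookrightarrow S'$ witnessing $(W,S)\cleq(W',S')$, the relation factors (after a harmless relabelling identifying $S$ with $I:=\iota(S)\subseteq S'$) as finitely many single \emph{bond increases} $m_{pq}\nearrow m'_{pq}$ on the fixed generating set $I$, carrying $(W,S)$ to the parabolic subsystem $(W'_I,I)$, followed by the inclusion of $(W'_I,I)$ into $(W',S')$ as a standard parabolic subsystem. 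Since the inequalities $a_n\le a_n'$ compose, it suffices to treat (I) passage to a standard parabolic subsystem and (II) a single bond increase; for each I exhibit a length-preserving injection of spheres, giving $a_n\le a_n'$ for every $n$ at once.

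For (I), standard parabolic subgroups are isometrically embedded: for $I\subseteq S$ and $w\in W_I$ one has $\ell_I(w)=\ell_S(w)$, since a word over the alphabet $I$ is reduced in $(W_I,I)$ if and only if it is reduced in $(W,S)$. The identity map therefore injects the radius-$n$ sphere of $(W_I,I)$ into that of $(W,S)$, so $a_n^{(W_I,I)}\le a_n^{(W,S)}$.

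For (II), suppose $M\le M'$ agree off the single unordered pair $\{p,q\}$, with $m:=m_{pq}<m'_{pq}=:m'$. Fix a total order on $S$ and let $\mathrm{NF}_M(w)\in S^\ast$ denote the ShortLex normal form of $w\in W(M)$; define $\phi\colon W(M)\to W(M')$ by sending $w$ to the element of $W(M')$ represented by the word $\mathrm{NF}_M(w)$. I claim $\phi$ is a length-preserving injection, whence $a_n^{(W(M),S)}\le a_n^{(W(M'),S)}$. The entire argument rests on one observation: \emph{no $M'$-braid relation on the pair $\{p,q\}$ can be applied to a word that is reduced in $W(M)$.} Indeed, by the isometric-embedding fact of (I), any contiguous alternating $\{p,q\}$-factor $\underbrace{pqp\cdots}_{L}$ of an $M$-reduced word is itself reduced in the dihedral group $\langle p,q\rangle$ of order $2m$, forcing $L\le m<m'$; thus the length-$m'$ alternating factor needed to trigger the new $\{p,q\}$-braid move is absent. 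Consequently every $M'$-braid move applicable to an $M$-reduced word is a braid move on a pair other than $\{p,q\}$, hence coincides with an $M$-braid move and preserves $M$-reducedness.

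By Tits' solution of the word problem, non-reducedness and equality of elements are detected purely by iterating braid moves; starting from an $M$-reduced word the previous paragraph shows that $M'$-braid moves never leave the set of $M$-reduced words, so they can never expose a factor $ss$. Hence $\mathrm{NF}_M(w)$ remains reduced in $W(M')$ (so $\phi$ preserves length), and any two $M$-reduced words that are $M'$-braid equivalent are already $M$-braid equivalent; applied to $\mathrm{NF}_M(w_1)$ and $\mathrm{NF}_M(w_2)$ this gives injectivity of $\phi$. The main obstacle is exactly this control of how reduced words and braid equivalence deform under a bond increase---note that the converse fails, as $pqp$ and $qpq$ are equal in $S_3$ but distinct in $D_\infty$, which is why $\phi$ is defined through canonical ShortLex representatives rather than on braid-equivalence classes. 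Once the observation is established, length-preservation and injectivity follow formally, and Moves (I)--(II) compose to yield Theorem~\ref{thm:an}.
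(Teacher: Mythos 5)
Your proposal is correct and takes essentially the same approach as the paper: it factors $\cleq$ through a parabolic subsystem plus single bond increases, transports elements via $w\mapsto \pi_{M'}(\mathrm{NF}_M(w))$ on ShortLex normal forms, and derives both length-preservation and injectivity from the key observation that the new $\{p,q\}$-braid move can never apply to an $M$-reduced word, so that by Tits' theorem the connecting sequences of $M'$-moves consist entirely of $M$-moves. The only (cosmetic) difference is that you exclude the long alternating factor via the isometrically embedded dihedral parabolic $W_{\{p,q\}}$, where the paper instead exhibits an explicit braid-move-plus-cancellation shortening of the subword $[s_0,r_0,m+1]$; the two arguments are interchangeable.
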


Spherical and affine Coxeter systems have, respectively, growth rate zero and one. One of the main results of this paper can be stated as follows.
\begin{thm*}[\ref{thm:tau}] 
Let $\WS$ be a non-affine, non-spherical Coxeter system. 
Then its growth rate satisfies $\omega\WS\geq \tau$, where $\tau = 1.13\dots$ is an algebraic integer of degree $127$ over $\QQ$, which is also a Perron number with minimal polynomial $m_\tau(t)$ given in \S\ref{s:tau}. 
Moreover, $\tau=\omega\WS$, where $\WS$ is the hyperbolic Coxeter system $E_{10}$. 
\end{thm*}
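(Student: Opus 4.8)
The plan is to leverage the monotonicity of Theorem~\ref{thm:an} to reduce the statement to a finite computation over \emph{minimal} systems. Since $\WS\cleq\WSp$ forces $a_n^{\WS}\le a_n^{\WSp}$ for every $n$, extracting $n$-th roots and passing to the $\limsup$ yields $\omega\WS\le\omega\WSp$, so $\omega$ is monotone for $\cleq$. Hence it suffices to bound $\omega$ below on the $\cleq$-minimal elements of the class of non-affine, non-spherical systems. First I would justify a descent: if a non-affine, non-spherical $\WS$ is not $\cleq$-minimal in this class, then one can either delete a generator (passing to a parabolic subsystem $\WI$, which is $\cleq$-smaller) or lower a bond strength $m_{st}$ (also $\cleq$-smaller), in each case keeping the system non-affine and non-spherical. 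Lowering $\infty$-bonds to finite values first, the remaining data forms a well-founded complexity measure, so the descent terminates at a $\cleq$-minimal non-affine, non-spherical $\WSp\cleq\WS$.

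Next I would identify these minimal systems explicitly. A $\cleq$-minimal system is necessarily irreducible: a reducible system is a direct product, and its Tits form is positive semidefinite iff each factor's is, so indefiniteness forces some (proper) factor to be already non-affine and non-spherical, contradicting minimality. For an irreducible minimal system every one-vertex-deleted parabolic is spherical or affine (else it would be a smaller member of the class), hence so is every proper parabolic. By the classification of hyperbolic Coxeter diagrams, an irreducible indefinite system all of whose proper parabolics are spherical or affine is exactly a \emph{Lann\'er} (compact hyperbolic) or \emph{quasi-Lann\'er} (paracompact hyperbolic) diagram, and the impossibility of lowering any bond pins the remaining labels; this leaves the finite, tabulated families \HC{} and \HNC{} displayed above. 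Here the mutation-invariance of $\pWS$ (Theorem~\ref{thm:mu}) is convenient to collapse $\cleq$-equivalent diagrams and shorten the case list.

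Finally I would evaluate $\omega$ on this finite list. For each diagram $\pWS$ is a rational function: Steinberg's formula expresses $1/\pWS$ as an alternating sum $\sum_{W_I\text{ finite}}(-1)^{|I|}/\pWI$ over the finite parabolics (up to $t\mapsto t^{-1}$), so $\pWS=N(t)/D(t)$ with $D$ built from the cyclotomic factors of the finite-parabolic Poincar\'e polynomials. Since the coefficients $a_n$ are nonnegative, Pringsheim's theorem makes the radius of convergence $R$ the least positive real root of $D$, whence $\omega\WS=R^{-1}$. Comparing the least positive roots across the list, I would show that the minimum of $\omega$ is attained by the rank-$10$ quasi-Lann\'er diagram $E_{10}$. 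For $E_{10}$ the least positive root $R$ of $D$ is simple and strictly dominates the remaining roots in modulus, so $\tau=R^{-1}$ is an algebraic integer whose minimal polynomial $m_\tau(t)$, recorded in \S\ref{s:tau}, has degree $127$, and $\tau$ is a Perron number; since $E_{10}$ itself lies in the class, $\omega\WS\ge\tau$ with equality there.

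The main obstacle will be this last step. Verifying that $E_{10}$ genuinely minimises $\omega$ over the entire finite list requires handling many explicit growth series and comparing their least positive roots, and establishing the Perron property — that the dominant root of the degree-$127$ factor is simple and strictly exceeds in modulus all of its conjugates, not merely that it is real and positive — is the delicate point; one natural route is to present $\tau$ as the dominant eigenvalue of a primitive nonnegative matrix governing a linear recurrence for $a_n^{E_{10}}$, so that Perron--Frobenius yields simplicity and strict dominance.
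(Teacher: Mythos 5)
Your proposal is correct and follows essentially the same route as the paper: monotonicity of $\omega$ under $\cleq$ (Cor.~\ref{cor:omega}), reduction to the $38$ $\cleq$-minimal non-spherical, non-affine systems --- which are necessarily hyperbolic (Lann\'er/quasi-Lann\'er) and which the paper simply imports from McMullen (Prop.~\ref{pro:mcmullen}) rather than re-running your descent --- followed by Steinberg's formula and Pringsheim's theorem to identify $\omega\WS$ with the inverse of the least positive real root of the denominator of $\pWS$, the $38$ cases and the degree-$127$ Perron verification for $E_{10}$ being done by explicit (computer) calculation exactly as you anticipate. Two small corrections: the integrality of $\tau$ comes from the denominator of $p_{E_{10}}(t)$ factoring as $(t-1)m_{\tau^{-1}}(t)$ with constant term $\pm 1$ (not from simplicity or dominance of the root, which only bear on the Perron property), and Theorem~\ref{thm:mu} plays no role in the paper's proof of this theorem --- mutation equivalence preserves $\pWS$ but is not the same as $\cleq$-comparability, so it cannot be used to shorten the list of minimal systems.
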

A remarkable coincidence occurrs (cf.~Rem.~\ref{rem:notes-tau}). 
Besides having the smallest minimial growth rate among Coxeter systems, $E_{10}$ is also known to minimise a certain function $\lambda_\rho$, which reflects, in the hyperbolic case, the metric properties of the orbifold defined by Tits' representation $\rho$ (cf.~\cite{mcmullen--cgsnhm}). 

For a group $G$ with a finite symmetric generating set $X\subseteq G\setminus \{1\}$ one defines the growth series by $p_{(G,X)}(t)=\sum_{n\in \NN_0} a_n t^n \in \ZZ\dbl t\dbr$, thus $\omega(G,X)$ coincides with the inverse of the radius of convergence of $p_{(G,X)}(t)$, considered as a power series over $\CC$. 
For a Coxeter system $\WS$ the growth series is also called the \emph{Poincar\'e series} of $\WS$.

In \S\ref{s:rig-growth} we define the new notion of a \emph{mutation} $\mu(M,X,Y,\sigma)$ of a Coxeter matrix $M$, which induces an equivalence relation $\sim$ on Coxeter systems. 
Mutations generalise diagram twisting (cf.~\cite{brady-etal--rcgag}), but in general they do not preserve the isomorphism class of the group. 
Nevertheless, the Poincar\'e series is invariant under mutations of the Coxeter matrix.
\begin{thm*}[\ref{thm:mu}]Let $\WS$ and $(W',S')$ be Coxeter systems satisfying $\WS\sim (W',S')$. Then $p_{\WS}(t)=p_{(W',S')}(t)$.
\end{thm*}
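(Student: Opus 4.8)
The plan is to reduce the statement to a purely local, combinatorial claim about finite standard parabolic subgroups, and then to invoke the classical rational-function identity that recovers the whole Poincar\'e series from the finite parabolic data. Since $\sim$ is by definition the equivalence relation generated by single mutations, it suffices to treat the case in which $(W',S')$ is obtained from $\WS$ by one mutation $\mu(M,X,Y,\sigma)$; the general case then follows by transitivity, equality of power series being itself transitive.

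The key tool is the reciprocity identity
\[
\frac{1}{p_{\WS}(t^{-1})}=\sum_{\substack{I\subseteq S\\ W_I\text{ finite}}}\frac{(-1)^{|I|}}{\pWI},
\]
valid for every Coxeter system $\WS$ as an equality of rational functions (one checks it directly on, e.g., the infinite dihedral group, where it reads $\tfrac{t-1}{t+1}=1-\tfrac{2}{1+t}$). The crucial feature of the right-hand side is that it depends on $\WS$ only through the collection of subsets $I\subseteq S$ with $W_I$ finite, each weighted by $(-1)^{|I|}$ and by its Poincar\'e polynomial $\pWI$; moreover $\pWI$ is an isomorphism invariant of the Coxeter system $\WI$, being $\prod_j (1-t^{d_j})/(1-t)$ for the degrees $d_j$ of the finite group $W_I$. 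Consequently $\pWS$ is entirely determined by the multiset of pairs $(|I|,\text{Coxeter type of }W_I)$ ranging over finite $I\subseteq S$. It therefore suffices to prove that a single mutation induces a bijection $I\mapsto I'$ between the finite standard parabolic subsets of $\WS$ and of $(W',S')$ preserving both the cardinality $|I|=|I'|$ and the Coxeter isomorphism type $W_I\cong W'_{I'}$.

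To exhibit this bijection I would analyse how $\mu(M,X,Y,\sigma)$ alters the Coxeter matrix. A mutation only re-wires the entries $M_{xy}$ with $x\in X$, $y\in Y$ across the prescribed interface, transporting them by the symmetry $\sigma$, while leaving $M$ unchanged elsewhere; here $\sigma$ is a diagram automorphism of the relevant spherical parabolic (in the special case of a diagram twist, $\sigma$ is conjugation by the longest element $w_J$ of a finite parabolic $W_J$), and so it carries finite-type sub-diagrams to finite-type sub-diagrams of the same type. For a subset $I$ disjoint from the interface the identity furnishes the obvious bijection; the content lies in the subsets $I$ that straddle $X$ and $Y$, where one sets $I'$ to be the image of $I$ under the relabelling induced by $\sigma$ and must check that the induced Coxeter sub-diagram on $I'$ is isomorphic to that on $I$, so that finiteness, rank, and type are preserved in both directions.

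The main obstacle is precisely this verification on the straddling subsets: one must show that mutation sends a finite parabolic to a finite parabolic of the same type, i.e.\ that $\sigma$ induces an isomorphism of the two induced sub-diagrams. This is exactly where the defining hypotheses on the data $(X,Y,\sigma)$ enter, guaranteeing that $\sigma$ respects the bond strengths $M_{xy}$ that it transports and that the interface cannot create or destroy a finite-type component. Once this local isomorphism is established, summing over all finite $I$ shows that the two instances of the right-hand side of the reciprocity identity coincide, whence $p_{\WS}(t^{-1})=p_{(W',S')}(t^{-1})$ as rational functions and therefore $\pWS=p_{(W',S')}(t)$.
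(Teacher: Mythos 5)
Your high-level strategy is exactly the paper's: reduce by transitivity to a single mutation, invoke Steinberg's identity \eqref{eq:sph-steinberg}, and produce a bijection $I\mapsto I^\sharp$ between the spherical residues $\spr\WS$ and $\spr(W',S')$ preserving cardinality and Coxeter-isomorphism type. However, the step you yourself flag as ``the main obstacle'' is the entire mathematical content of the paper's proof, and your sketch of it would not go through as written. The missing idea is a dichotomy forced by condition \ref{li:TY} of Def.~\ref{dfn:twist}: since $m_{t,y}=\infty$ for all $t\in T$, $y\in Y$, a spherical $I$ can never meet both $T$ and $Y$, so either (a) $I\subseteq X\sqcup T\sqcup Z$, or (b) $I\subseteq X\sqcup Y\sqcup Z$ with $I\cap Y\neq\emptyset$. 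The paper defines $I^\sharp$ \emph{differently} in the two cases: the identity in case (a) (where $n_{r,s}=m_{\sigma(r),\sigma(s)}=m_{r,s}$ for $r,s\in X$ because $\sigma$ is a Coxeter automorphism), and the map sending $r\in I\cap X$ to $\sigma^{-1}(r)$ (identity elsewhere) in case (b). Your proposal instead uses the identity on sets ``disjoint from the interface'' and a uniform relabelling by $\sigma$ on straddling sets. Two concrete problems: first, the direction is wrong --- by \eqref{eq:mutation} one has $n_{r,s}=m_{\sigma(r),s}$ for $r\in X$, $s\in Y$, so it is $n_{\sigma^{-1}(r),s}=m_{r,s}$ that holds, while $n_{\sigma(r),s}=m_{\sigma^2(r),s}$ is in general different; second, and more seriously, without the $T$/$Y$ dichotomy your relabelling has no chance on a hypothetical spherical $I$ meeting $X$, $Y$ and $T$ simultaneously, since nothing in Def.~\ref{dfn:twist} relates $m_{\sigma^{-1}(r),t}$ to $m_{r,t}$ for $t\in T$; it is precisely condition \ref{li:TY} that rules such $I$ out, and condition \ref{li:ZX} ($m_{z,\sigma(x)}=m_{z,x}$) that makes the case-(b) map compatible with the unchanged entries across $X\times Z$. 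You never invoke any of conditions \ref{li:TY}--\ref{li:ZX}, so your assertion that ``the defining hypotheses enter'' at this point is a placeholder, not an argument.

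Two smaller inaccuracies: you describe $\sigma$ as ``a diagram automorphism of the relevant spherical parabolic,'' but finiteness of $W_X$ is a hypothesis of diagram \emph{twisting} only; in a general mutation $(W_X,X)$ may be infinite, which is exactly why mutations can change the isomorphism type of the group. And bijectivity of $I\mapsto I^\sharp$, which you gesture at with ``in both directions,'' needs the observation that $I^\sharp\cap Y=I\cap Y$ (so the case distinction is preserved) together with the fact that the inverse map is the $\sharp$-map of the inverse mutable tuple $(\mu(M,X,Y,\sigma),X,Y,\sigma^{-1})$. With the dichotomy, the case-dependent definition of $I^\sharp$, and the entry-by-entry check against \eqref{eq:mutation} supplied, your outline closes up into the paper's proof; as it stands, the decisive verification is asserted rather than performed.
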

Thus, mutations provide a tool to produce finitely many non-isomorphic Coxeter groups with the same growth series. It is an open problem whether there exist infinitely many groups with the same growth series (cf.~\cite[Ch. 1, Pbl.s 1--2]{mann--hgg}).

\subsection*{Acknowledgements} 
I wish to thank J.~Parkinson, P.~Spiga and Th.~Weigel for helpful discussions, and M.~Bucher, T.~Smirnova-Nagnibeda and A.~Talambutsa for the simulating conversations during the conference ``Geometric and analytic group theory'' in Ventotene, Italy.

\noindent{}I would also like to thank R. Howlett for providing a copy of \cite{howlett-muehlherr--icgwdnpr}, and the anonymous referees for helpful comments.

\section{Growth of finitely generated groups}\label{s:growth}
Let $G$ be a finitely generated group, and let $X=X^{-1}\subseteq G\setminus\{1\}$ be a finite, symmetric set of generators. 
The \emph{length} of $g\in G$ with respect to $X$ is the minimal $n$ such that $g=x_1x_2\dots x_n$ with $x_i\in X$; the \emph{length function} will be denoted by  $\ell_{(G,X)}\colon G\to \NN_0$. It has a natural interpretation in terms of the metric on the Cayley graph $\Cay(G,X)$.

For $n\in \NN_0$, the ball in $\Cay(G,X)$ centred around $1_G$ with radius $n$ will be denoted by $B^{(G,X)}_n=\{g\in G\mid \ell_{(G,X)}(g)\leq n\}$, the corresponding sphere by $A^{(G,X)}_n=\{g\in G\mid \ell_{(G,X)}(g)= n\}$. Their sizes are $a^{(G,X)}_n=|A^{(G,X)}_n|$ and $b^{(G,X)}_n=|B^{(G,X)}_n|$.

The central objects under investigation are the growth series 
\[p_{(G,X)}(t)=\sum_{n\in \NN_0} a_n^{(G,X)} t^n\in \ZZ\dbl t\dbr,\]
and the growth rate $\omega(G,X) =\limsup_{n\to\infty}\sqrt[n]{a_n^{(G,X)}}$. 

Note that $G$ has \emph{exponential growth} if $\omega(G,X)>1$ for some (and hence any) generating set $X$. 
The present paper only deals with finitely generated linear groups $G$. Therefore, $G$ has \emph{polynomial growth} with respect to some (and hence any) generating system $X$ if $\omega(G,X)\leq 1$ (cf.~\cite[Cor. 5]{tits--fslg}).

The \emph{minimal growth rate} $\omega(G)$ is the infimum of $\omega(G,X)$, as $X$ runs over all finite, symmetric generating sets of $G$.

\section{Coxeter groups}\label{s:cox-gps}
Standard references for Coxeter groups include  \cite{bourbaki--gal46,humphreys--rgcg}.
\subsection{Coxeter systems}\label{ss:cox-syst}
Let $S$ be a finite set, and let $M$ be an $(S\times S)$-matrix such that 
$m_{s,s}=1$, and $m_{s,r}=m_{r,s}\in \ZZ_{\geq 2}\cup\{\infty\}$ for all $s,r\in S$, $s\neq r$. Then $M$ is a \emph{Coxeter matrix} over $S$. 

The \emph{Coxeter system} associated with a Coxeter matrix $M$ over $S$ is the pair $\WS$ where 
\begin{equation}\label{eq:cox-pres}
W=W(M)=\langle S\mid (sr)^{m_{s,r}} \,\text{ if }m_{s,r}<\infty\rangle.
\end{equation}

The Coxeter matrix $M$ (or, equivalently, the presentation \eqref{eq:cox-pres}) is often encoded in the Coxeter graph $\Gamma(M)$ (cf.~\cite[Ch.~IV \no 1.9]{bourbaki--gal46}).
Either datum is called the \emph{type} of $\WS$.

If $I\subseteq S$ let $W_I=\langle I\rangle \leq W$. The \emph{parabolic subsystem} $(W_I,I)$ is a Coxeter system in its own right, with Coxeter matrix $M_I=(m_{s,r})_{s,r\in I}$.
Its Coxeter graph is the graph induced from $\Gamma$ by the vertices in $I$, and 
\begin{equation}\label{eq:l}
\ell_{(W_I,I)}=\ell_{\WS}|_{W_I}.
\end{equation}
The finite set $\spr=\spr(W,S)=\{I\subseteq S\mid |W_I|<\infty\}$ is called the set of \emph{spherical residues}.

A \emph{Coxeter-isomorphism} $\phi\colon \WS\to (W',S')$ of Coxeter systems of types $M$ and $M'$ respectively, is a bijection $\phi\colon S\to S'$ such that $m'_{\phi(s),\phi(r)}=m_{s,r}$ for all $s,r\in S$.

Any Coxeter group $\WS$ is linear via the Tits' reflection representation $\rho\colon W\to \GL(\RR^S)$ (cf.~\cite[Ch.~V, \S4]{bourbaki--gal46}). The representation $\rho$ is determined by the symmetric matrix\footnote{For short, we put $\frac\pi\infty=0$.} $B=B_M=\left(-\cos\frac{\pi}{m_{s,r}}\right)_{s,r\in S}$, and the signature of $B$ induces the following tetrachotomy on irreducible Coxeter systems.

\begin{renum}
\item If $B$ is positive definite, then $\WS$ is \emph{spherical},
\item if $B$ is positive semidefinite with $0$ a simple eigenvalue, then $\WS$ is \emph{affine}, 
\item if $B$ has $|S|-1$ positive and $1$ negative eigenvalue, then $\WS$ is \emph{hyperbolic}\footnote{There are several non-compatible notions of hyperbolicty, cf.~\cite[Note 6.9]{davis--gtcg}. In the present work ``hyperbolic'' coincides with Bourbaki's notion (cf.~\cite[Ch. V, \S4, Ex.13]{bourbaki--gal46}).}, or
\item none of the above conditions applies.
\end{renum}
The irreducible Coxeter system $\WS$ is spherical if, and only if, $W$ is a finite group. The classification of spherical and affine systems is classical (cf.~\cite[Ch. VI]{bourbaki--gal46}). 
For a characterisation of hyperbolic Coxeter systems see \S\ref{ss:min}.

\subsection{The word problem}\label{ss:word-pbl}
If $S$ is a finite set, let $S^\ast$ be the free monoid\footnote{Words in $S^\ast$ are denoted in boldface: $\bow =s_1 s_2 \dots s_n\in S^\ast$. A subword $\bow'$ of $\bow$ is either the empty word $\boldsymbol{1}$ or a word of the form $\bow'= s_i s_{i+1} \dots s_k$ for  $1\leq i\leq k\leq n$.} over $S$, equipped with the natural $\NN_0$-grading $\deg\colon S^\ast\mapsto \NN_0$, $\deg(s)=1$ for all $s\in S$, and the ShortLex total order with respect to some total order on $S$ (cf.~\cite[\S2.5]{epstein-etal--wpg}).
For $s,t\in S$ and $m\in \NN_0$ let  $[s,t,m]\in S^\ast$ be the word
\[ [s,t,m]=\begin{cases}
(st)^{m/2}&\text{ if }2\mid m,\\
(st)^{\frac{m-1}{2}}s&\text{ if }2\nmid m.
\end{cases}
\]

Let $M$ be a Coxeter matrix over $S$.
The \emph{$M$-operations} (or \emph{$M$-moves}) on $S^\ast$ are modifications of words of the following types:
\begin{equation}\label{eq:Mmoves}
\begin{array}{cl}
M^{(1)}\colon & \bov (ss)\bou \mapsto \bov\bou,\\
M^{(2)}\colon & \bov [s,r,m_{s,r}]\bou \mapsto \bov [r,s,m_{s,r}]\bou, \quad\text{ if }m_{s,r}<\infty.
\end{array}
\end{equation}

Let $(W,S)$ be the Coxeter system of type $M$, and let $\pi_M\colon S^\ast\to W(M)$ be the canonical projection (of monoids). Then, for all $\bow\in S^\ast$,
\begin{equation}\label{eq:length-deg}
\deg(\bow)\geq \ell_M(\pi_M(\bow)).
\end{equation}

A word $\bow\in S^\ast$ is called \emph{reduced} for $\WS$ if equality holds in \eqref{eq:length-deg}.
If $w\in W(M)$, there is a unique ShortLex-minimal element $\sigma_M(w)\in S^\ast$ such that $\pi_M\sigma_M(w)=w$. Thus, $\sigma_M\colon W(M)\to S^\ast$ is a section of $\pi_M$, with the additional property\footnote{Actually, any section of $\pi_M$ with property \eqref{eq:length-section} would suffice for the purposes of this paper.} that \begin{equation}\label{eq:length-section}
\deg(\sigma_M(w))=\ell_M(w).
\end{equation}  

A word $\bow\in S^\ast$ is called \emph{$M$-reduced} if its degree cannot be decreased by applying any finite sequence of $M$-operations. 
If two words $\bow,\bow'$ are connected by a sequence of $M$-moves, then they represent the same element in $W(M)$:
\begin{equation}\label{eq:piM}
\pi_M(\bow)=\pi_M(\bow'),
\end{equation} 
and hence reduced words are $M$-reduced.
Moreover, Tits solved the word problem as follows.

\begin{thm}[{\cite{tits--pmgc}, \cite[Ch.~IV, \S1, Ex.13]{bourbaki--gal46}}]\label{thm:word-pbl}
  Let $(W,S)$ be the Coxeter system with Coxeter matrix $M$. 
\begin{renum}
\item A word in $S^\ast$ is reduced for $\WS$ if, and only if, it is $M$-reduced.
\item\label{wp:li2} If $\bow,\bow'\in S^\ast$ are reduced words which represent the same element $\pi_M(\bow)=\pi_M(\bow')\in W$, then there is a sequence of $M$-operations taking $\bow$ to $\bow'$, and this sequence entirely consists of $M^{(2)}$-operations.
\end{renum}
\end{thm}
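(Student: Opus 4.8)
The plan is to reduce the statement to the \emph{Exchange Condition} and run everything off of it. Observe that one half of part~(i) is already recorded in the text: since $M$-operations preserve $\pi_M$ (by \eqref{eq:piM}) and only $M^{(1)}$ can lower degree, a reduced word is automatically $M$-reduced. So the remaining content is (a) the converse of~(i), i.e. \emph{$M$-reduced $\Rightarrow$ reduced}, and (b) the braid-invariance statement~(ii). I would establish (ii) first, as the pure \emph{Word Property} that any two reduced expressions of a fixed $w\in W$ are joined by $M^{(2)}$-moves alone, and then deduce the converse of~(i) from~(ii) together with the Deletion Condition. The single analytic input that powers both is the Exchange Condition, which I would obtain from the reflection representation $\rho$ and the form $B$ already introduced.

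First I would set up roots: put $V=\RR^S$ with basis $\{\alpha_s\}_{s\in S}$, let $\rho(s)\colon \lambda\mapsto \lambda-2B(\alpha_s,\lambda)\alpha_s$, and form the root system $\Phi=W\{\alpha_s\}$ with its partition $\Phi=\Phi^+\sqcup\Phi^-$ into roots with nonnegative, resp.\ nonpositive, coordinates. The key sign lemma is $\ell(ws)>\ell(w)\iff w(\alpha_s)\in\Phi^+$, proved by induction on $\ell(w)$; from it one shows $\ell(w)=\#\{\alpha\in\Phi^+: w(\alpha)\in\Phi^-\}$. The \emph{Exchange Condition} then follows by tracking, along a reduced expression $w=s_1\cdots s_k$ with $\ell(ws)<\ell(w)$, the first index at which the sign of $(s_i\cdots s_k)(\alpha_s)$ changes: this index $i$ yields $ws=s_1\cdots\widehat{s_i}\cdots s_k$.

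With Exchange in hand I would prove~(ii) by induction on $\ell(w)=n$. Given reduced words $\bow=s_1\cdots s_n$ and $\bow'=r_1\cdots r_n$ for $w$, if $s_1=r_1$ strip the common first letter and recurse. If $s_1\neq r_1$, both are left descents of $w$, and the crux lemma says that then $m:=m_{s_1,r_1}<\infty$ and the alternating element $[s_1,r_1,m]$ (equivalently $[r_1,s_1,m]$) left-divides $w$ reducedly. Using the inductive hypothesis I bring $\bow$ to a reduced word with prefix $[s_1,r_1,m]$ and $\bow'$ to one with prefix $[r_1,s_1,m]$; a single $M^{(2)}$-move exchanges these prefixes, and the common reduced tail is handled by induction, so $\bow$ and $\bow'$ are $M^{(2)}$-connected. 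For the converse of~(i), let $\bow=s_1\cdots s_k$ be non-reduced and pick $j$ minimal with $s_1\cdots s_j$ non-reduced. Applying Exchange to the reduced prefix $s_1\cdots s_{j-1}$ and the letter $s_j$ produces an index $i$ with $s_{i+1}\cdots s_{j-1}s_j$ and $s_i s_{i+1}\cdots s_{j-1}$ both reduced and equal in $W$; by~(ii) they are $M^{(2)}$-connected, and substituting inside $\bow$ creates an adjacent pair $s_i s_i$ which cancels by $M^{(1)}$. Hence $\bow$ is not $M$-reduced, giving $M$-reduced $\Rightarrow$ reduced.

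The main obstacle is the dihedral-descent lemma driving the $s_1\neq r_1$ case of~(ii): proving that two distinct left descents force $m_{s_1,r_1}<\infty$ and that the length-$m$ alternating element divides $w$ on the left. This is exactly the point where the finiteness of the parabolic $\langle s_1,r_1\rangle$ and the braid move $[s,r,m_{s,r}]\mapsto[r,s,m_{s,r}]$ enter as the operation $M^{(2)}$, and it is the step requiring the most care, since one must control termination of the rewriting and verify that no length is lost along the way. The geometric bookkeeping behind the Exchange Condition (the sign lemma and the inversion-set count) is routine once the representation $\rho$ is in place, so I expect essentially all the difficulty to be concentrated in this dihedral reduction.
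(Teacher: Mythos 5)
Your proof is correct, but note that the paper itself contains no proof of Theorem~\ref{thm:word-pbl}: it is imported as a classical black box from Tits and Bourbaki, and the paper's own contribution (Theorem~\ref{thm:an}) merely consumes it through Corollary~\ref{cor:misc-R}. So there is no internal argument to compare against; what you have done is supply the standard proof from the cited literature (it is essentially the argument in \cite[\S\S3.3--3.4]{bjorner-brenti--ccg} and goes back to \cite{tits--pmgc}). Your division of labour is the usual one: the sign lemma $\ell(ws)>\ell(w)\Leftrightarrow w(\alpha_s)\in\Phi^+$ and the inversion-set count give the Exchange Condition; the Word Property (part~\ref{wp:li2}) follows by induction on $\ell(w)$, with the genuinely delicate step being, as you correctly isolate, the dihedral-descent lemma. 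For completeness of that step: with $I=\{s_1,r_1\}$, write $w=uv$ with $u\in W_I$ and $v$ of minimal length in $W_Iv$, so that $\ell(w)=\ell(u)+\ell(v)$; a letter of $I$ is a left descent of $w$ if and only if it is one of $u$, and in a dihedral group an element with two distinct left descents must be the longest element, which exists only when $m_{s_1,r_1}<\infty$ and then equals $\pi_M([s_1,r_1,m])=\pi_M([r_1,s_1,m])$ --- exactly what your braid-prefix argument needs, and what licenses the single $M^{(2)}$-move. Two small remarks: your roadmap invokes the Deletion Condition, but your actual derivation of ``$M$-reduced $\Rightarrow$ reduced'' never uses it (Exchange plus part~\ref{wp:li2} suffice, via the first non-reduced prefix and the cancellation $s_is_i$ by an $M^{(1)}$-move), so that mention can simply be dropped; and the substitution of the $M^{(2)}$-connected segment inside $\bow$ is legitimate precisely because the moves in \eqref{eq:Mmoves} act on subwords with arbitrary flanking words $\bov,\bou$, a point worth saying explicitly. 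With the dihedral lemma written out as above, your proposal is a complete and correct self-contained proof of the quoted theorem.
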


Following \cite[\S\S3.3--3.4]{bjorner-brenti--ccg}, let $\caR_M(w)=\{\bow\in S^\ast\mid \pi_M(\bow)=w,\text{ and }\deg(\bow)=\ell_M(w)\}$ be the set of the reduced words in $S^\ast$ representing $w\in W(M)$.
 
\begin{cor}\label{cor:misc-R} For $\bow,\bow'\in S^\ast$, and $w\in W(M)$ the following hold.
\begin{renum}
\item\label{li:sigma} $\sigma_M(w)\in \caR_M(w)$.
\item\label{li:closure} If $\bow\in \caR_M(w)$, and there exists a sequence $\bow \stackrel{M}{\longmapsto} \bow'$ of $M$-moves taking $\bow$ to $\bow'$, then $\bow'\in \caR_M(w)$.
\item\label{li:7} If $\pi_M(\bow)=\pi_M(\bow')$ and $\bow'$ is reduced, then there exists a sequence of $M$-moves 
\[\bow \longmapsto \bow'.\]
\end{renum}
\end{cor}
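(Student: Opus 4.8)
The plan is to derive all three items directly from Tits' solution of the word problem (Thm.~\ref{thm:word-pbl}), using throughout two elementary facts about $M$-moves. The first is that they preserve the projection, i.e.\ $\pi_M(\bow)=\pi_M(\bow')$ whenever $\bow\mapsto\bow'$, which is exactly~\eqref{eq:piM}. The second is that they never increase the degree: the move $M^{(1)}$ deletes a factor $ss$ and so lowers the degree by $2$, while $M^{(2)}$ replaces $[s,r,m_{s,r}]$ by $[r,s,m_{s,r}]$, both words of degree $m_{s,r}$, so it leaves the degree unchanged; hence along any sequence $\bow\mapsto\bow'$ of $M$-moves one has $\deg(\bow')\leq\deg(\bow)$. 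Item~\ref{li:sigma} is then immediate: by construction $\pi_M\sigma_M(w)=w$, and by~\eqref{eq:length-section} we have $\deg(\sigma_M(w))=\ell_M(w)$, so $\sigma_M(w)$ is a reduced word representing $w$, i.e.\ $\sigma_M(w)\in\caR_M(w)$.

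For item~\ref{li:closure}, suppose $\bow\in\caR_M(w)$ and there is a sequence of $M$-moves $\bow\mapsto\bow'$. By~\eqref{eq:piM} we have $\pi_M(\bow')=\pi_M(\bow)=w$, whence $\deg(\bow')\geq\ell_M(w)$ by~\eqref{eq:length-deg}. On the other hand the degree does not increase along $M$-moves, so $\deg(\bow')\leq\deg(\bow)=\ell_M(w)$. The two inequalities force $\deg(\bow')=\ell_M(w)$, i.e.\ $\bow'$ is reduced and lies in $\caR_M(w)$.

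The substantive item is~\ref{li:7}, which I would prove in two stages. Put $w=\pi_M(\bow)=\pi_M(\bow')$. First I would reduce $\bow$ to a reduced word by $M$-moves: as long as the current word fails to be reduced, Thm.~\ref{thm:word-pbl}\,(i) says it is not $M$-reduced, so some finite sequence of $M$-moves strictly lowers its degree; since every intermediate word still represents $w$ (by~\eqref{eq:piM}) its degree is bounded below by $\ell_M(w)$ (by~\eqref{eq:length-deg}), so this iteration, strictly decreasing a nonnegative integer, terminates after finitely many steps at a word $\tilde\bow$ with $\deg(\tilde\bow)=\ell_M(w)$. Thus $\tilde\bow$ is reduced, $\pi_M(\tilde\bow)=w$, and $\bow\mapsto\tilde\bow$ is a sequence of $M$-moves. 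Second, $\tilde\bow$ and $\bow'$ are now two reduced words representing the same element $w$, so Thm.~\ref{thm:word-pbl}\,\ref{wp:li2} supplies a sequence of ($M^{(2)}$-)moves $\tilde\bow\mapsto\bow'$. Concatenating the two sequences (which is again a sequence of $M$-moves) yields $\bow\mapsto\tilde\bow\mapsto\bow'$, as required.

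The only point needing care—and the mild main obstacle—is the termination step in~\ref{li:7}: one must note that the reducibility granted by Thm.~\ref{thm:word-pbl}\,(i) is available afresh at each stage precisely because $M$-moves preserve $\pi_M$, so that the strictly decreasing degrees are trapped between the (fixed) lower bound $\ell_M(w)$ and the starting value $\deg(\bow)$, guaranteeing that the process halts exactly at a reduced word. Everything else reduces to the two structural facts recorded at the outset.
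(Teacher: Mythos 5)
Your proof is correct, and it is essentially the argument the paper intends: the corollary is stated there without proof as an immediate consequence of Theorem~\ref{thm:word-pbl} together with the two facts you isolate (that $M$-moves preserve $\pi_M$, as in \eqref{eq:piM}, and never increase degree). Your careful treatment of termination in item~\ref{li:7} --- strictly decreasing degrees trapped between $\ell_M(w)$ and $\deg(\bow)$, halting exactly at a reduced word to which Theorem~\ref{thm:word-pbl}\,\ref{wp:li2} applies --- fills in the details exactly as the paper's framework suggests.
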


\subsection{Poincar\'e series}\label{s:poincare-ser}
Coxeter systems are pairs consisting of a finitely generated group $W$ and a finite, symmetric generating set $S$, and therefore the machinery described in \S\ref{s:growth} applies. 
In the context of Coxeter systems the growth series is also known as the \emph{Poincar\'e series} $\pWS$ of $\WS$. 
If $\WS$ is spherical then $\pWS$ is a polynomial, which can be explicitly computed in terms of the degrees of the polynomial invariants of $\WS$, simply known as the \emph{degrees} of $\WS$ (cf.~\cite{solomon--ofcg}, \cite[Ch.~3]{humphreys--rgcg}). 
For arbitrary Coxeter systems, the Poincar\'e series can be computed using the following property. 

\begin{pro}[{\cite{steinberg--elag}}]\label{pro:ps}
Let $\WS$ be a Coxeter system with Poincar\'e series $\pWS$. Then
\begin{equation}\label{eq:sph-steinberg}
  \frac{1}{p_{(W,S)}(t^{-1})}=\sum_{I\in \spr}\frac{(-1)^{|I|}}{\pWI},
\end{equation}
where $\spr=\spr\WS$. In particular, the Poincar\'e series $\pWS$ is a rational function.
\end{pro}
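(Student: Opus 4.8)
The plan is to clear denominators and prove the equivalent identity
\[
p_{\WS}(t^{-1})\sum_{I\in\spr}\frac{(-1)^{|I|}}{\pWI}=1 ,
\]
working in the field of rational functions, equivalently coefficientwise in the ring of formal power series in $t^{-1}$; this is legitimate because each length-sphere is finite and $\spr$ is finite, so every rearrangement below is locally finite. Before the main computation I would record two standard facts about a spherical parabolic subsystem $\WI$ (so $W_I$ is finite with longest element $w_I$). First, \emph{palindromicity}: the map $v\mapsto w_Iv$ is a length-reversing bijection of $W_I$, whence $\pWI=t^{\ell(w_I)}p_{\WI}(t^{-1})$. Second, the \emph{coset decomposition}: writing $W^I=\{u\in W\mid \ell(us)>\ell(u)\ \forall s\in I\}$ for the minimal-length representatives of $W/W_I$, every $w\in W$ factors uniquely as $w=uv$ with $u\in W^I$, $v\in W_I$ and $\ell(w)=\ell(u)+\ell(v)$; substituting $t\mapsto t^{-1}$ this gives
\[
\frac{p_{\WS}(t^{-1})}{p_{\WI}(t^{-1})}=\sum_{u\in W^{I}}t^{-\ell(u)} .
\]

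Combining the two facts, the $I$-th summand becomes $p_{\WS}(t^{-1})\,(-1)^{|I|}/\pWI=(-1)^{|I|}t^{-\ell(w_I)}\sum_{u\in W^{I}}t^{-\ell(u)}=(-1)^{|I|}\sum_{u\in W^{I}}t^{-\ell(uw_{I})}$, using $\ell(uw_I)=\ell(u)+\ell(w_I)$ for $u\in W^I$. The assignment $u\mapsto uw_I$ is a bijection from $W^I$ onto $\{w\in W\mid I\subseteq D(w)\}$, where $D(w)=\{s\in S\mid \ell(ws)<\ell(w)\}$ is the right descent set: indeed $I\subseteq D(w)$ holds precisely when the $W_I$-component of $w$ in the coset decomposition equals $w_I$. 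Summing over $I\in\spr$ and interchanging the two locally finite summations yields
\[
p_{\WS}(t^{-1})\sum_{I\in\spr}\frac{(-1)^{|I|}}{\pWI}=\sum_{w\in W}t^{-\ell(w)}\sum_{\substack{I\in\spr\\ I\subseteq D(w)}}(-1)^{|I|} .
\]

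The crux — and the step I expect to be the main obstacle — is the collapse of the inner sum, which rests on the structural fact that \emph{every subset of a descent set is spherical}: if $I\subseteq D(w)$ then $W_I$ is finite, so $I\in\spr$ automatically. I would prove this from the coset decomposition (which is valid for arbitrary $I$, finite or not): the hypothesis $I\subseteq D(w)$ forces the $W_I$-component $v$ of $w$ to have all of $I$ as right descents inside $W_I$, and an element of a Coxeter group with full descent set is a longest element, which exists only when the group is finite (the standard characterisation of $w_0$, cf.~\cite[Ch.~IV]{bourbaki--gal46}). Care must be taken that this is not circular: the factorisation itself does not require $W_I$ finite, and finiteness is exactly the conclusion.

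Granting this, the constraint ``$I\in\spr$'' in the inner sum is redundant for $I\subseteq D(w)$, so the inner sum is the plain alternating sum $\sum_{I\subseteq D(w)}(-1)^{|I|}=(1-1)^{|D(w)|}$, which vanishes unless $D(w)=\emptyset$. Since $w=1$ is the unique element with empty descent set, only that term survives and contributes $t^{-\ell(1)}=1$. This establishes the displayed identity and hence Steinberg's formula \eqref{eq:sph-steinberg}. Rationality is then immediate: the right-hand side is a finite $\QQ$-linear combination of reciprocals of the polynomials $\pWI$, so $p_{\WS}(t^{-1})$ is rational, and therefore so is $\pWS$.
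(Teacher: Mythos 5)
The paper offers no internal proof of this proposition at all: it is imported by citation from Steinberg, so there is nothing in the text to compare your argument against, and it must be judged on its own merits. On those merits it is correct and complete, and it is in essence the classical proof of Steinberg's formula (the same inclusion--exclusion over descent sets found in Steinberg's original memoir and in standard references such as Davis's book on Coxeter groups). Your three ingredients --- palindromicity $p_{\WI}(t)=t^{\ell(w_I)}p_{\WI}(t^{-1})$ for finite $W_I$, the additive coset factorisation $w=uv$ with $u\in W^I$, $v\in W_I$, and the translation $u\mapsto uw_I$ identifying $W^I w_I$ with $\{w\in W\mid I\subseteq D(w)\}$ --- are all standard and correctly deployed, and you rightly isolate the one point where the argument could fail: the inner alternating sum over \emph{spherical} $I\subseteq D(w)$ collapses to $(1-1)^{|D(w)|}$ only because every subset of a descent set generates a finite parabolic subgroup. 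Your treatment of that lemma is sound and, as you note, non-circular, since the additive factorisation holds for arbitrary $I$. One small tightening: the Bourbaki characterisation of the longest element is usually stated \emph{within} a finite group, so for the implication ``full descent set forces finiteness'' the cleanest justification is the root-system argument --- if $\ell(vs)<\ell(v)$ for all $s\in I$, then $v$ sends every simple (hence every positive) root of the $I$-subsystem to a negative root, so $|\Phi_I^+|=\ell(v)<\infty$ and $W_I$ is finite; a one-line insertion of this would make the crux fully self-contained. The formal bookkeeping in $\ZZ\dbl t^{-1}\dbr$ (finite spheres since $S$ is finite, finite $\spr$) is handled correctly, and the concluding rationality claim follows exactly as you say.
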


It is often possible to focus only on irreducible systems.
\begin{lem} Let $(W_1,S_1)$ and $(W_2,S_2)$ be Coxeter systems, and let $\WS=(W_1\times W_2,S_1\sqcup S_2)$ be their product. 
Then $\omega(W,S)=\max\{\omega{(W_1,S_1)}, \omega{(W_2,S_2)} \}$.
\end{lem}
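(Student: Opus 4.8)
The plan is to relate the Poincar\'e series of the product to the Poincar\'e series of the factors, and then extract the growth rate as the inverse radius of convergence. First I would observe that every element $w\in W_1\times W_2$ factors uniquely as $w=(w_1,w_2)$ with $w_i\in W_i$, and that since $S_1$ and $S_2$ generate commuting subgroups touching disjoint generators, the length is additive:
\begin{equation}\label{eq:len-add}
\ell_{\WS}(w_1,w_2)=\ell_{(W_1,S_1)}(w_1)+\ell_{(W_2,S_2)}(w_2).
\end{equation}
This is a standard fact for direct products of Coxeter systems (the Coxeter graph of the product is the disjoint union of the two graphs, so no $M$-move ever mixes letters from $S_1$ with letters from $S_2$, and Theorem~\ref{thm:word-pbl} then gives additivity of length). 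Granting \eqref{eq:len-add}, the sphere of radius $n$ in $\WS$ decomposes as a disjoint union over $n=k+(n-k)$, whence $a_n^{\WS}=\sum_{k=0}^{n} a_k^{(W_1,S_1)}\,a_{n-k}^{(W_2,S_2)}$. In terms of generating series this is precisely the Cauchy product, so $\pWS=p_{(W_1,S_1)}(t)\cdot p_{(W_2,S_2)}(t)$.

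Once the Poincar\'e series factors as a product, I would pass to growth rates via the identification (recalled in \S\ref{s:growth} and \S\ref{s:poincare-ser}) of $\omega(G,X)$ with the inverse of the radius of convergence of $p_{(G,X)}(t)$ over $\CC$. Writing $R$, $R_1$, $R_2$ for the radii of convergence of $\pWS$, $p_{(W_1,S_1)}(t)$, $p_{(W_2,S_2)}(t)$ respectively, the product of two power series with non-negative coefficients has radius of convergence equal to the minimum of the two radii: the product converges wherever both factors converge, giving $R\geq \min\{R_1,R_2\}$, while at $t=\min\{R_1,R_2\}$ the factor achieving the minimum already diverges and the other has non-negative coefficients, forcing divergence of the product, so $R\leq\min\{R_1,R_2\}$. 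Taking inverses yields
\begin{equation}\label{eq:om-max}
\omega\WS=\max\{\omega(W_1,S_1),\omega(W_2,S_2)\},
\end{equation}
which is the claim.

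Alternatively, and perhaps more cleanly for the exponential growth rate, one can argue directly with the convolution formula $a_n^{\WS}=\sum_{k} a_k^{(W_1,S_1)}a_{n-k}^{(W_2,S_2)}$: each summand is at most $(n+1)\max_i\{\omega_i\}^{n}$ up to subexponential factors, giving $\omega\WS\leq\max\{\omega_1,\omega_2\}$, while retaining a single well-chosen summand (say $k$ near $\alpha n$ for the factor realising the larger rate) gives the reverse inequality by the $\limsup$ definition. The main obstacle is the rigorous justification of the length-additivity \eqref{eq:len-add}; everything afterwards is a formal manipulation of power series with non-negative coefficients. I would therefore spend the bulk of the proof establishing \eqref{eq:len-add} carefully from Tits' solution of the word problem (Theorem~\ref{thm:word-pbl}), noting that no $M^{(1)}$- or $M^{(2)}$-move applied to a reduced word can alter the multiset of $S_1$- versus $S_2$-letters, since $m_{s,r}=2$ for $s\in S_1$, $r\in S_2$ makes the only interaction a commutation that preserves both degrees.
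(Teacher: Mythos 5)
Your route is essentially the paper's: factor the Poincar\'e series as $\pWS=p_{(W_1,S_1)}(t)\cdot p_{(W_2,S_2)}(t)$ and identify $\omega$ with the inverse radius of convergence, getting $\omega\WS\leq\max\{\omega(W_1,S_1),\omega(W_2,S_2)\}$ from the fact that the product converges (the paper says: is holomorphic) wherever both factors do. The only real difference is that the paper simply cites Bourbaki for the factorisation, where you propose to re-derive it from length additivity via Tits' Theorem~\ref{thm:word-pbl}; that derivation is sound (the product Coxeter matrix has $m_{s,r}=2$ across the two factors, so $M$-moves preserve the $S_1$/$S_2$-degrees), just more work than the lemma needs.

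There is, however, one step that is false as stated: your justification of $R\leq\min\{R_1,R_2\}$ asserts that ``the factor achieving the minimum already diverges'' at $t=\min\{R_1,R_2\}$. A power series with non-negative coefficients need not diverge at its radius of convergence (e.g.\ $\sum_n t^n/n^2$ at $t=1$), so this requires an argument: either invoke rationality of Poincar\'e series (Prop.~\ref{pro:ps}), whence by Pringsheim's theorem $t=R$ is a singularity, necessarily a pole, and the series does diverge there; or --- simpler, and what the paper actually does --- avoid boundary behaviour entirely via the coefficientwise bound $c_n=\sum_k a_k b_{n-k}\geq a_n b_0=a_n$, using that Poincar\'e series have non-negative coefficients and constant term $1$, which yields $\omega\WS\geq\max\{\omega(W_1,S_1),\omega(W_2,S_2)\}$ directly. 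Note that your own ``alternative'' paragraph already contains this repair: retaining the single summand $k=n$ (no need for $k$ near $\alpha n$) is exactly the paper's argument, so your proposal becomes fully correct once that convolution argument, rather than the boundary-divergence claim, is taken as the proof of the lower bound.
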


\begin{proof}
The factorisation $p_{\WS}(t)= p_{(W_1,S_1)}(t)\cdot p_{(W_2,S_2)}(t)$ holds (cf.~\cite[Ch. IV, n.\!\textsuperscript{os}~1.8--1.9]{bourbaki--gal46}).
Since Poincar\'e series are series with non-negative coefficients and with degree-zero coefficient equal to one, then $\omega(W,S)\geq \max\{\omega(W_1,S_1), \omega(W_2,S_2)\}$. On the other hand, the product $p(t)$ of two rational functions $p_1(t)$ and $p_2(t)$ is holomorphic \emph{at least} in the smallest of the open disks centred in zero of radii $\rho_1, \rho_2$, where each of the two factors are holomorphic: thus
\[\omega\WS=\frac1\rho \leq \frac1{\min\{\rho_1,\rho_2\}}= \max\{\omega(W_1,S_1),\omega(W_2,S_2)\}.\qedhere\]
\end{proof}

\section{The partial order $\cleq$ on the class of Coxeter systems}\label{s:orders}
The core of the proof of Theorem~\ref{thm:tau} is the reduction to a finite set of elementary verifications. 
The tools which provide this reduction are the partial order $\cleq$ over the set of (Coxeter-isomorphism classes of) Coxeter systems, the corresponding monotonicity results, and the finiteness of the set of minimal non-affine, non-spherical Coxeter systems.

Let $\WS$ and $\WSp$ be Coxeter systems with Coxeter matrices $M$, $M'$ respectively. 
Define $\WS\cleq\WSp$ whenever there exists an injective map $\phi\colon S\to S'$ such that $m_{s,r}\leq m'_{\phi(s),\phi(r)}$ for all $s,r\in S$ (cf.~\cite[\S6]{mcmullen--cgsnhm}).

In particular, if $\WS$ and $\WSp$ are Coxeter-isomorphic (cf.~\S\ref{ss:cox-syst}) then $\WS\cleq\WSp$ and $\WSp\cleq\WS$. 
Therefore the preorder $\cleq$ descends to a partial order on the set of Coxeter-isomorphism classes of Coxeter systems. 
With a mild abuse of notation we will avoid the distinction between a Coxeter system and its Coxeter-isomorphism class.

\subsection{Monotonicity properties}

The partial order $\cleq$ has the following important property.

\begin{thmx}\label{thm:an}
Let $\WS$ and $\WSp$ be Coxeter systems with Coxeter matrices $M$ and $M'$, respectively. Let $a_k=a_k^{\WS}$ and $a'_k=a_k^{\WSp}$ be the growth sequences with respect to the Coxeter generating set. 
If $\WS\cleq \WSp$ then $a_k\leq a'_k$ for all $k\in \NN_0$.
\end{thmx}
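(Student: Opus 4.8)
The plan is to factor the relation $\cleq$ into two elementary moves and treat each separately. Given an injection $\phi\colon S\to S'$ with $m_{s,r}\le m'_{\phi(s),\phi(r)}$, set $I=\phi(S)\subseteq S'$ and let $N$ be the Coxeter matrix on $I$ defined by $n_{\phi(s),\phi(r)}=m_{s,r}$. Then $\phi$ is a Coxeter-isomorphism $\WS\to (W(N),I)$, so these two systems have identical growth sequences, and it remains to compare $(W(N),I)$ with $\WSp$. I would do this in two moves: a \emph{parabolic} comparison of $(W'_I,I)$ with $\WSp$, and an \emph{entrywise} comparison of $(W(N),I)$ with $(W'_I,I)=(W(M'_I),I)$, which share the generating set $I$ and satisfy $N\le M'_I$ (entrywise, off the diagonal).

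The parabolic step is immediate from \eqref{eq:l}: since $\ell_{(W'_I,I)}=\ell_{\WSp}|_{W'_I}$, every element of $W'_I$ of $I$-length $k$ has $S'$-length $k$, whence $A^{(W'_I,I)}_k\subseteq A^{\WSp}_k$ and so $a_k^{(W'_I,I)}\le a_k^{\WSp}$. Chaining this with the entrywise step will give the theorem.

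For the entrywise step, write $M\le M'$ for two Coxeter matrices on a common set $S$. I would construct a length-preserving injection $\Phi\colon W(M)\to W(M')$ by $\Phi(w)=\pi_{M'}(\sigma_M(w))$: one reads the ShortLex-minimal $M$-reduced word of $w$ as a word over the same alphabet and projects it into $W(M')$. If $\sigma_M(w)$ is again $M'$-reduced, then by the equality case of \eqref{eq:length-deg} its degree equals $\ell_{M'}(\Phi(w))$, so $\Phi$ preserves length. Injectivity would follow because two $M'$-reduced words with the same image in $W(M')$ are linked by ${M'}^{(2)}$-moves (Theorem~\ref{thm:word-pbl}(ii)), and one shows each such move fixes the image in $W(M)$.

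Both points rest on a single observation, which I expect to be the crux. An ${M'}^{(2)}$-move on a pair $(s,r)$ requires an alternating substring $[s,r,m'_{s,r}]$ of length $m'_{s,r}$. Inside an $M$-reduced word any contiguous subword is itself $M$-reduced, hence reduced in the dihedral parabolic $\langle s,r\rangle\le W(M)$, whose elements have $M$-length at most $m_{s,r}$; thus such an alternating substring has length at most $m_{s,r}\le m'_{s,r}$, forcing $m'_{s,r}=m_{s,r}$. Consequently the only ${M'}^{(2)}$-moves applicable to an $M$-reduced word are those on pairs with $m'_{s,r}=m_{s,r}$, i.e.\ genuine $M^{(2)}$-moves, which preserve $\pi_M$ and $M$-reducedness by Corollary~\ref{cor:misc-R}. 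From this, $\sigma_M(w)$ admits no degree-decreasing $M'$-move (a deletion ${M'}^{(1)}$ would need a repeated letter, which only a braid move could create, yet all words reachable by braid moves remain $M$-reduced and so contain no such repetition), so $\sigma_M(w)$ is $M'$-reduced; and running the observation along the braid chain of Theorem~\ref{thm:word-pbl}(ii) keeps $\pi_M$ constant, so $\Phi(w_1)=\Phi(w_2)$ forces $\pi_M(\sigma_M(w_1))=\pi_M(\sigma_M(w_2))$, i.e.\ $w_1=w_2$. The injection $\Phi$ then carries $A_k^{(W(M),S)}$ into $A_k^{(W(M'),S)}$, and combining the entrywise and parabolic steps yields $a_k\le a'_k$. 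The case $m'_{s,r}=\infty$ needs no separate treatment: there is then no ${M'}^{(2)}$-move on that pair, so the same reasoning applies verbatim.
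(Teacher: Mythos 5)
Your proposal is correct and is essentially the paper's own proof: the same factorisation of $\cleq$ into a parabolic step (via \eqref{eq:l}) and an entrywise step built on the injection $w\mapsto\pi_{M'}\sigma_M(w)$, with the same crux that the only ${M'}^{(2)}$-moves applicable to an $M$-reduced word are those with $m'_{s,r}=m_{s,r}$, i.e.\ genuine $M^{(2)}$-moves, so that $\pi_M$ and $M$-reducedness are preserved along the chains furnished by Theorem~\ref{thm:word-pbl}\,\ref{wp:li2}. Your justification of the crux via dihedral parabolics (subwords of reduced words are reduced, and elements of $W_{\{s,r\}}$ have length at most $m_{s,r}$) is equivalent in content to the paper's explicit degree-reducing pair of moves on the subword $[\so,\ro,m+1]$, and your uniform treatment of all pairs merely bypasses the paper's harmless ``single differing entry'' reduction and its detour through $\sigma_N(w)$.
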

\begin{proof}
Let $\phi \colon S\to S'$ be an injective map realising the relation $\WS\cleq \WSp$. Let $S''=\image\phi\subseteq S'$, let $W''=\langle S''\rangle\leq W'$, and let $(W'',S'')$ be the corresponding parabolic subsystem of $\WSp$. 
Let $\psi\colon S\to S''$ be given by $\psi(s)=\phi(s)$ for all $s\in S$. Therefore $\phi=\iota\circ\psi$, where $\iota$ is the inclusion $S''\subseteq S'$, and hence one has $\WS\cleq (W'',S'')\cleq \WSp$. 

Let $a''_k=a_k^{(W'',S'')}$. Since $(W'',S'')$ is a parabolic subgroup of $\WSp$,  then $\ell_{(W'',S'')}=\ell_{(W',S')}|_{W''}$, by \eqref{eq:l}. 
Hence $A_k^{(W'',S'')}\subseteq A_k^{(W',S')}$, and then 
\begin{equation}\label{eq:aiiai}
a''_k\leq a'_k\quad\text{ for all }k\in \NN_0.
\end{equation}

We will now prove that $a_k\leq a''_k$ for all $k$. 
Let $M''$ be the Coxeter matrix of $(W'',S'')$, and let $N=(m''_{\psi(s),\psi(r)})_{s,r\in S}$.
Since $\psi$ is a bijection, $(W(N),S)$ is Coxeter-isomorphic to $(W'',S'')$, and in particular it has growth sequence $a_k^{(W(N),S)}=a_k''$. 
Let $B_k$ and $B^N_k$ be the balls of radius $k$ in $\Cay\WS$ and $\Cay(W(N),S)$, respectively.

By hypothesis $m_{s,r}\leq n_{s,r}$ for all $s,r\in S$, and suppose that $N\neq M$. 
Without loss of generality, assume there exists a unique $2$-subset $\{\so,\ro\}\subseteq S$ such that $m_{\so,\ro}<n_{\so,\ro}$. 
Let $m=m_{\so,\ro}$ and $n=n_{\so,\ro}$.

\noindent\textbf{Claim.} For all $k$, the map $\eta_k= \left.\pi_N \sigma_M\right|_{B_k} \colon B_k\to B_k^N$, where $\sigma_M$ and $\pi_N$ are defined as in \S\ref{ss:word-pbl}, is well defined and injective. 

\noindent\textit{Proof of the claim.} 
First, notice that $\pi_N\sigma_M(B_k)\subseteq B_k^N$, since $\deg(\sigma_M(w))=\ell_M(w)$ by \eqref{eq:length-section} and $\ell_N(\pi_N(\sigma_M(w)))\leq \ell_M(w)$ by \eqref{eq:length-deg}.
Hence $\eta_k$ is well defined.

Suppose $v,v'\in B_k$ satisfy $\eta_k(v)=\eta_k(v')$ and let $w=\eta_k(v)\in B_k^N$. Thus
\[\pi_N\sigma_M(v)=\pi_N\sigma_M(v')=w=\pi_N\sigma_N(w).\]
Then, by Cor.~\ref{cor:misc-R}, \ref{li:7}, there exist sequences of $N$-moves
\begin{equation}\label{eq:Nmoves}
\sigma_M(v)\longmapsto \sigma_N(w) \longmapsfrom \sigma_M(v').
\end{equation}
Consider first the sequence $\sigma_M(v)\longmapsto \sigma_N(w)$ on the left, and suppose it can be written as the concatenation of elementary $N$-moves
\begin{equation}\label{eq:Nmove-seq}
\sigma_M(v)=\bou_0\stackrel{\nu_0}{\longmapsto}\bou_1 
\stackrel{\nu_1}{\longmapsto}\bou_2
{\longmapsto}\dots {\longmapsto}\bou_r
\stackrel{\nu_r}{\longmapsto}\bou_{r+1}=\sigma_N(w).
\end{equation}
Assume by contradiction, that there exists some $t$ for which $\nu_t$ is the $N^{(2)}$-move 
\begin{equation}\label{eq:badmove}
\nu_t\colon \bou_t =\bou'[\so,\ro,n]\bou'' \longmapsto \bou'[\ro,\so,n]\bou''=\bou_{t+1},
\end{equation}
hence  $n<\infty$, by \eqref{eq:Mmoves}. Let $t_0$ be the minimum of such $t$'s.
Thus, the sequence of moves $\nu_{t_0-1}\circ \nu_{t_0-2}\circ\dots\circ \nu_1\circ\nu_0$ is a sequence of $M$-moves transforming $\sigma_M(v)\in \caR_M(v)$ into $\bou_{t_0}$. Hence, by Cor.~\ref{cor:misc-R},~\ref{li:sigma}--\ref{li:closure}, $\bou_{t_0}\in \caR_M(v)$. 
Since $n>m$ the word $\bou_{t_0}$ has a subword of the form $[\so,\ro,m+1]$.
Therefore one may apply the $M$-moves 
\[\begin{split}
\bou_{t_0}&=\bou'[\so,\ro,m+1]\bou''=\bou'\so[\ro,\so,m]\bou'' \stackrel{M^{(2)}}{\longmapsto} \bou'\so[\so,\ro,m]\bou'' =\\ 
&=\bou'\so\so[\ro,\so,m-1]\bou'' \stackrel{M^{(1)}}{\longmapsto} \bou'[\ro,\so,m-1]\bou''=\bou''',
\end{split}\]
and hence $\deg(\bou_{t_0})> \deg\bou'''$, against the hypothesis that $\bou_{t_0}$ is ($M$-)reduced. 
This gives the desired contradiction and, thus, no $N^{(2)}$-move of the form \eqref{eq:badmove} can occurr. 
Since all the remaining $N$-moves are also $M$-moves, the sequence \eqref{eq:Nmove-seq} only consists of $M$-moves.
An analogous argument applies to the sequence $\sigma_M(v')\longmapsto \sigma_N(w)$. 
Hence, the sequences in \eqref{eq:Nmoves} entirely consist of $M$-moves, and by \eqref{eq:piM}
\[v=\pi_M\sigma_M(v)=\pi_M\sigma_N(w)=\pi_M \sigma_M(v')=v',\]
which proves the injectivity of $\eta_k$.

\smallskip
Let now $v\in A_k^{\WS}\subseteq B_k$. Then $\deg(\sigma_M(v))=k$ and the previous argument shows that $\sigma_M(v)$ is also $N$-reduced, therefore $\ell_N(\eta_k(v))=\ell_N(\pi_N\sigma_M(v))=k$.

It follows that the maps $\theta_k=\eta_k|_{A_k}\colon A_k^{\WS}\to A^{(W(N),S)}_k$ are well defined injections, and hence
\begin{equation}\label{eq:aaii}
a_k\leq a_k^{(W(N),S)}=a''_k\quad\text{ for all }k\in \NN_0.
\end{equation}
This, together with \eqref{eq:aiiai}, completes the proof.
\end{proof}

Theorem~\ref{thm:an} has the following immediate consequence.
\begin{cor}\label{cor:omega}
If $\WS$ and $\WSp$ are Coxeter systems such that $\WS\cleq\WSp$, then  \[\omega\WS\leq \omega\WSp.\]
\end{cor}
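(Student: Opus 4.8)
The plan is to read off the inequality between growth rates directly from the termwise domination of growth sequences furnished by Theorem~\ref{thm:an}; no further structural input about Coxeter systems is required, which is exactly why the statement is labelled an immediate consequence.

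First I would set $a_n=a_n^{\WS}$ and $a'_n=a_n^{\WSp}$ and apply Theorem~\ref{thm:an} to the hypothesis $\WS\cleq\WSp$, obtaining the termwise bound $a_n\leq a'_n$ for every $n\in\NN_0$. Since both sequences consist of non-negative reals and the map $x\mapsto\sqrt[n]{x}$ is non-decreasing on $[0,\infty)$, this termwise inequality is preserved under taking $n$-th roots, so that $\sqrt[n]{a_n}\leq\sqrt[n]{a'_n}$ for all $n$.

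The remaining step is to pass to the limit superior. Because $\limsup$ respects pointwise inequalities between sequences, applying it to the bound $\sqrt[n]{a_n}\leq\sqrt[n]{a'_n}$ gives
\[\omega\WS=\limsup_{n\to\infty}\sqrt[n]{a_n}\leq\limsup_{n\to\infty}\sqrt[n]{a'_n}=\omega\WSp,\]
where the two outer equalities are just the definition of the growth rate from \S\ref{s:growth}. This is the desired conclusion.

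I expect no genuine obstacle here. The entire difficulty has already been concentrated in Theorem~\ref{thm:an}, whose proof builds the injections $\theta_k$ between spheres; once the termwise inequality $a_n\leq a'_n$ is in hand, what remains is only the elementary monotonicity of the $n$-th root and of $\limsup$. The one point worth stating explicitly, rather than glossing over, is that $\limsup$ is monotone with respect to eventual (indeed termwise) domination of sequences, so that the inequality survives the passage to the limit.
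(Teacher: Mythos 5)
Your proof is correct and is exactly the argument the paper intends: the paper gives no written proof, stating only that the corollary is an ``immediate consequence'' of Theorem~\ref{thm:an}, and your chain --- termwise bound $a_n\leq a'_n$, monotonicity of $x\mapsto\sqrt[n]{x}$, monotonicity of $\limsup$ --- is the standard filling-in of that remark. No gaps.
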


\subsection{Minimal non-spherical, non-affine Coxeter systems}\label{ss:min}

Let $\CX$ be the set of (Coxeter-isomorphism classes of) non-affine, non-spherical, irreducible Coxeter systems, and let  $\CM =\min_{\cleq} \CX$ be the set of $\cleq$-minimal elements of $\CX$.

It is well known that hyperbolic Coxeter systems are characterised as those systems such that every proper irreducible parabolic subsystem is either of spherical or affine type (cf.~\cite[Ch.~V, \S4, Ex.~13]{bourbaki--gal46}). 
By minimality, $\CM$ consists of hyperbolic Coxeter systems, which are classified in an infinite family of rank-three systems, and $72$ exceptions of rank $|S|\geq 4$ (cf.~\cite[\S\S6.8--6.9]{humphreys--rgcg}).
The infinite family consists of the $\langle a,b,c\rangle$-triangle groups with $\frac1a+\frac1b+\frac1c<1$, and among those only the $\langle2,3,7\rangle$, $\langle3,3,4\rangle$ and $\langle2,4,5\rangle$-triangle groups are $\cleq$-minimal. 
Among the $72$ exceptions, $35$ are in $\CM$. 
Therefore,

\begin{pro}[{\cite[Thm.~6.6, Tbl. 5]{mcmullen--cgsnhm}}]\label{pro:mcmullen}
  $|\CM|=38$.
\end{pro}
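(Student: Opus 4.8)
The plan is to combine a characterisation of $\cleq$-minimality with the classification of hyperbolic systems, and then to perform a finite verification. First I would confirm that every element of $\CM$ is hyperbolic. Suppose $\WS\in\CX$ is $\cleq$-minimal; I claim every proper irreducible parabolic subsystem is spherical or affine. Indeed, if some proper irreducible parabolic $(W_J,J)$ with $J\subsetneq S$ were neither spherical nor affine, then $(W_J,J)\in\CX$, and the inclusion $J\hookrightarrow S$ witnesses $(W_J,J)\cleq\WS$ with $J\subsetneq S$, so $(W_J,J)\lneq\WS$ in $\CX$, contradicting minimality. If instead a proper parabolic $(W_I,I)$ were reducible with a non-spherical, non-affine irreducible factor, I would replace $I$ by the vertex set of the corresponding connected component of $\Gamma(M_I)$ and argue as before. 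Hence $\CM$ lies inside the class of hyperbolic systems, which by \cite[Ch.~V, \S4, Ex.~13]{bourbaki--gal46} and \cite[\S\S6.8--6.9]{humphreys--rgcg} consists of the infinite rank-three triangle family together with the $72$ exceptions of rank $\geq 4$.

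Next I would reduce, for a \emph{hyperbolic} $\WS$, the test for membership in $\CM$ to a test on single label decreases. Suppose $\WSp\lneq\WS$ lies in $\CX$; then $\omega\WSp>1$. If $\WSp$ had strictly smaller rank, its vertex set would embed into a proper subset $I\subsetneq S$, whence $\WSp\cleq(W_I,I)\cleq\WS$ and Corollary~\ref{cor:omega} would give $\omega(W_I,I)\geq\omega\WSp>1$, forcing a non-spherical, non-affine irreducible factor in a proper parabolic of $\WS$ and contradicting hyperbolicity. Thus any competitor has the same vertex set as $\WS$ and is obtained purely by lowering labels. Decomposing such a decrease into single steps $\WS=\WS_0\gneq\WS_1\gneq\dots\gneq\WSp$, the first step $\WS_1$ satisfies $\omega(\WS_1)\geq\omega\WSp>1$; if $\WS_1$ stays connected it is irreducible and lies in $\CX$, while if lowering a label to $2$ disconnects it, a factor of $\WS_1$ equals a proper parabolic of $\WS$ of exponential growth, again contradicting hyperbolicity. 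Consequently, a hyperbolic system is $\cleq$-minimal if, and only if, decreasing any single entry $m_{s,r}$ (down to $2$, i.e.\ possibly deleting the edge) yields a system that is spherical, affine, or reducible with only spherical and affine factors — equivalently, a system not in $\CX$.

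Finally I would run this criterion on the two strata. For the rank-three triangle groups $\langle a,b,c\rangle$ with $\tfrac1a+\tfrac1b+\tfrac1c<1$, minimality requires each admissible lower neighbour $\langle a-1,b,c\rangle$, $\langle a,b-1,c\rangle$, $\langle a,b,c-1\rangle$ to fail to be hyperbolic; this confines candidates to a bounded region just outside the locus $\tfrac1a+\tfrac1b+\tfrac1c=1$, and a short enumeration leaves exactly $\langle2,3,7\rangle$, $\langle3,3,4\rangle$ and $\langle2,4,5\rangle$. For the $72$ exceptional diagrams of rank $\geq 4$ I would apply the same single-label-decrease test to each diagram in turn, recording whether some decrease re-enters $\CX$; the diagrams admitting no such decrease are precisely the minimal ones, and they number $35$. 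Summing gives $|\CM|=3+35=38$. The main obstacle is this last step: it is a finite but sizeable case analysis over the $72$ diagrams, and the genuine content is organising it into a table (as in \cite[Tbl.~5]{mcmullen--cgsnhm}) so that the minimality test is applied uniformly and no diagram is double-counted or overlooked. Well-foundedness of $\cleq$ below any fixed system — labels and rank are bounded below — guarantees that this finite search does determine all of $\CM$, completing the count.
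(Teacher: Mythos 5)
Your proposal is correct and follows essentially the same route as the paper: there, too, $\cleq$-minimality together with Bourbaki's characterisation shows $\CM$ consists of hyperbolic systems, and the count $3+35=38$ then comes from the classification (the rank-three triangle family plus the $72$ exceptional diagrams), with the finite verification that you propose to organise via your single-label-decrease criterion delegated to McMullen's Table~5. The one point to watch is your repeated use of ``$\WSp\in\CX$ implies $\omega\WSp>1$'': this must be drawn from the classical dichotomy for Coxeter groups (subexponential growth forces a product of spherical and affine factors, via linearity, Tits and Gromov) and not from Theorem~\ref{thm:tau}, whose proof depends on this very proposition.
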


\section{The minimal growth rate of Coxeter groups}\label{s:tau}

Following the notation of \cite{gross-etal--cfcp}, let $E_{10}$ be the Coxeter system with Coxeter graph 
\[ \raisebox{4pt}{$\Gamma(E_{10})=$}\quad\CoxGrHNCLVI\,\,\,.
\]

\begin{thmx}\label{thm:tau}
If $\WS$ is a non-spherical, non-affine Coxeter system, then its growth rate satisfies
\[
\omega\WS\geq \tau=1.138078743\dots,
\]
where $\tau$ is the growth rate of the hyperbolic Coxeter system $E_{10}$. 
In particular, $\tau$ is the inverse of the smallest positive real root of the denominator of the Poincar\'e series $p_{E_{10}}(t)$ of the Coxeter system $E_{10}$.
Moreover, $\tau$ is an algebraic integer of degree $127$ over $\QQ$, with minimal polynomial
\[\begin{split}
m_\tau(t)=&\,t^{127} - t^{125} - t^{120} + t^{118} - t^{116} - t^{115} + t^{109} + t^{106} + t^{103} + t^{102} + 2t^{101} + t^{100} + t^{97} \\& + t^{96} + t^{91} - t^{90} - 2t^{89} - t^{88} - t^{87} - t^{86} - t^{85} - 2t^{84} - 2t^{83} - t^{82} - 2t^{81} - 3t^{80} - t^{79}\\& - t^{78} - 2t^{77} - t^{76} - t^{75} - t^{74} - t^{72} - t^{71} + t^{70} + t^{69} + 2t^{67} + 2t^{66} + t^{65} + 2t^{64} + 2t^{63}\\& + 2t^{62} + 3t^{61} + 2t^{60} + 2t^{59} + 3t^{58} + 3t^{57} + 2t^{56} + 2t^{55} + 2t^{54} + t^{53} + 2t^{52} + 2t^{51}\\& + t^{46} - t^{45} - 2t^{44} - t^{43} - t^{42} - 2t^{41} - 2t^{40} - 2t^{39} - 2t^{38} - 2t^{37} - 2t^{36} - 2t^{35} - t^{34}\\& - 2t^{33} - 3t^{32} - t^{31} - t^{29} - t^{28} - t^{27} + t^{25} + t^{22} + t^{21} + t^{20} + t^{19} + t^{18} + t^{17} + t^{16}\\& + t^{15} + t^{14} + t^{13} + t^{12} - t - 1.
\end{split}\]
The integer $\tau$ is a Perron number, i.e., an algebraic integer whose module stricly exceeds the module of its algebraic conjugates (cf.~\cite{lind--eftms,lind--etmsrcai}).
\end{thmx}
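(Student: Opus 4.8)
The plan is to use the monotonicity Theorem~\ref{thm:an} to replace the infinite class of non-affine, non-spherical systems by the finite set $\CM$, and then to finish by an explicit computation of finitely many Poincar\'e series. First I would reduce to irreducible systems: decomposing $\WS$ into irreducible components and invoking the product Lemma (applied inductively), $\omega\WS$ equals the maximum of the growth rates of the components. Since $\WS$ is non-spherical and non-affine, at least one component $(W_0,S_0)$ is neither spherical nor affine, i.e. $(W_0,S_0)\in\CX$ (a product of spherical and affine irreducible components is again of spherical or affine type), and $\omega\WS\ge\omega(W_0,S_0)$. Thus it suffices to bound $\omega$ from below on $\CX$.

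Next I would pass to the $\cleq$-minimal systems. The poset $(\CX,\cleq)$ is well-founded: descending in $\cleq$ weakly decreases the rank $|S|$, which cannot fall below $3$ inside $\CX$ because rank-$\le 2$ systems are dihedral, hence spherical or affine; and when the rank is fixed the defining injection is a bijection, forcing some label $m_{s,r}\in\{2,3,\dots\}\cup\{\infty\}$ to decrease strictly. Neither can occur infinitely often, so every element of $\CX$ dominates some $(W_1,S_1)\in\CM$. By Corollary~\ref{cor:omega}, $\omega\WS\ge\omega(W_1,S_1)\ge\min_{\CM}\omega$, and by Proposition~\ref{pro:mcmullen} this minimum is taken over just $38$ systems. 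One checks directly that $E_{10}$ is itself $\cleq$-minimal (deleting any vertex yields an affine or spherical diagram, and no label exceeds $3$), so $E_{10}\in\CM$; it then remains to show $\omega(E_{10})=\min_{\CM}\omega=\tau$.

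For each of the $38$ minimal systems I would compute the Poincar\'e series as a rational function $N(t)/D(t)$ from Steinberg's formula (Proposition~\ref{pro:ps}), in which every $p_{(W_I,I)}(t)$ over a spherical residue $I\in\spr$ is the explicit product $\prod_i(1+t+\dots+t^{d_i-1})$ determined by the degrees $d_i$ of the finite group $W_I$. Because the coefficients $a_n$ are non-negative, Pringsheim's theorem identifies the radius of convergence with the smallest-modulus pole, a positive real root $\rho$ of $D(t)$, whence $\omega=1/\rho$. Comparing the $38$ values pins the minimum to $E_{10}$ and yields $\tau=1/\rho=1.138\ldots$. Factoring $D_{E_{10}}(t)$ over $\QQ$ isolates the irreducible factor of degree $127$ having $\rho$ as its smallest positive real root; its reciprocal polynomial is $m_\tau(t)$, the minimal polynomial of $\tau$. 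Finally, to conclude that $\tau$ is a Perron number I would verify that $\rho$ is a simple root strictly dominated in modulus by the remaining roots of that factor --- equivalently that $\tau$ strictly exceeds the moduli of its conjugates --- which follows from the dominant Pringsheim singularity being simple and unique of minimal modulus, confirmed by locating the $127$ roots numerically.

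The main obstacle is the third step. The two reductions are structurally clean, but the substance lies in the large finite computation: assembling $p_{E_{10}}(t)$, whose Steinberg sum ranges over very many spherical residues of a rank-$10$ system; factoring the high-degree denominator to extract and certify the degree-$127$ irreducible $m_\tau$; and comparing the smallest positive roots across all $38$ systems while rigorously verifying the strict modulus-dominance that underlies the Perron property.
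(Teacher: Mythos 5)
Your proposal is correct and follows essentially the same route as the paper: monotonicity of $\omega$ under $\cleq$ (Cor.~\ref{cor:omega}) plus McMullen's finiteness result $|\CM|=38$ (Prop.~\ref{pro:mcmullen}) reduce the claim to finitely many explicit computations, and the rational-function/positive-coefficients argument via Prop.~\ref{pro:ps} identifies $\omega\WS$ with the inverse of the smallest positive real root of the denominator, exactly as in the paper's proof. The details you add --- the reduction to irreducible components via the product lemma, the well-foundedness of $(\CX,\cleq)$, Pringsheim's theorem, and the numerical certification of the degree-$127$ factor and the Perron property --- are precisely what the paper delegates to its \S2 lemma, to McMullen's theorem, and to the Magma computations recorded in Rem.~\ref{rem:notes-tau}.
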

\begin{proof}
By monotonicity of the function $\omega$ with respect to $\cleq$  (cf.~Cor.~\ref{cor:omega}) and by Prop.~\ref{pro:mcmullen}, it suffices to compute $\omega\WS$ for finitely many $\WS$.

Moreover, $p_{\WS}(t)$ is power series with non-negative coefficients, and also a rational function, by Prop.~\ref{pro:ps}. 
Thus, $\omega\WS$ is the inverse of the minimal, positive real root of the denominator of $p_{\WS}(t)$.
\end{proof}
Theorem~\ref{thm:tau} can be stated in terms of a gap in the set 
\[\Omega=\{\omega\WS \mid \WS \text{ Coxeter system}\,\}\subseteq \{0,1\}\cup \RR_{\geq \tau}.\]

\begin{remark}\label{rem:notes-tau}
\begin{renum}
\item The direct verifications for the $38$ relevant Coxeter systems were performed (cf.~\cite{terragni--dhcs}) with the help of the computational algebra system Magma. The code is available at \url{https://sites.google.com/site/tomterragni/research/computations}.
\item The denominator of $p_{E_{10}}(t)$ is $(t-1)m_{\tau^{-1}}(t)$.
\item In many cases $\omega\WS$ is an algebraic integer, and also a Perron number. 
It is known that every Perron number $\lambda$ is realised as the Perron--Frobenius eigenvalue of an aperiodic, non-negative integral matrix $P_\lambda$ (cf.~\cite[Thm.~1]{lind--etmsrcai}).
Lind's proof is constructive, however the algorithm given in the proof may produce a Perron--Frobenius matrix of non-minimal size. It would be interesting to find a minimal-sized Perron--Frobenius matrix for $\tau$. %matrix for $\tau$, and determine the smallest size it can have.\ttnote{HERE}
\item The Poincar\'e series of (all but one) exceptional hyperbolic Coxeter systems are also listed in \cite{chapovalov-leites-stekolshchik--pshcgfvfd}. In the same paper, some radii of convergence are computed.
\item It is quite surprising that $\tau$ is not realised as growth rate of any of the small rank Coxeter systems, instead it is associated with the Coxeter system $E_{10}$.
However, the growth rate of one of the $\cleq$-minimal rank-three hyperbolic Coxeter groups, namely the one with Coxeter system $\langle 2,3,7\rangle$, is Lehmer's number $\lambda_{\text{Lehmer}}=1.17\dots$ (cf.~\cite{hironaka--wiln}), and an interesting coincidence occurrs.
Let
\[\lambda_\rho(W,S)= \inf\left( \{\lambda_\rho (w) \mid w\in W\}\cap \RR_{>1}\right),\]
where $\lambda_\rho(w)$ is the spectral radius of the matrix $\rho(w)$, and $\rho$ is Tits' reflection representation.% $\rho$) of an element $w\in W$.

The number $\lambda_\rho(W,S)$ represents a universal bound for eigenvalues of elements in Coxeter groups. Moreover, if $\WS$ is hyperbolic, then $\log\lambda_\rho(W,S)$ is interpreted as a lower bound for the length of non-degenerate, closed hyperbolic geodesics in the orbifold $\HH^{|S|-1}/W$.

McMullen proved that 
\[\inf_{\WS}\lambda_\rho \WS=\lambda_{\text{Lehmer}},\]
the infimum being taken as $\WS$ runs through the non-affine, non-spherical Coxeter systems (cf.~\cite{mcmullen--cgsnhm}).
The infimum is actually a minimum, and it is attained \emph{exactly} for the Coxeter system $E_{10}$.

It would be interesting to understand this phenomenon.
\end{renum}
\end{remark}

\section{Rigidity and growth}\label{s:rig-growth}

It is well known that there exist non Coxeter-isomorphic Coxeter systems for which the groups are abstractly isomorphic. 
For a discussion on the isomorphism problem for Coxeter groups, see \cite{charney-davis--wcsdcg,muehlherr--ipcg,bahls--ipcg}, and references therein.

\subsection{Coxeter generating systems}\label{ss:cox-gensyst}
Let $G$ be a group generated by a finite set of involutions $R\subseteq G$. 
Then $M(R)=\left(\ord(sr)\right)_{s,r\in R}$ is a Coxeter matrix. 
Let $(W,R)$ be the Coxeter system with Coxeter matrix $M(R)$. 
The identity on $R$ induces a surjective homomorphism of groups $j_R\colon W\to G$. 
Moreover, when $j_R$ is an isomorphism $G$ is a \emph{Coxeter group with Coxeter generating system $R$}.

If $\WS$ is a Coxeter system and $\sigma$ is either an inner automorphism or the automorphism of $W$ induced by a Coxeter automorphism of $\WS$, then $\sigma(S)$ is another Coxeter generating system, and $(W,\sigma(S))$ is Coxeter-isomorphic to $\WS$. 
In general, any inner-by-Coxeter automorphism preserves the Coxeter-isomorphism type.
An automorphism which is not inner-by-Coxeter will be called \emph{exotic}.

\subsection{Isomorphisms of Coxeter groups}\label{ss:iso}

A major problem in the theory of Coxeter groups is to find all possible Coxeter generating systems of a given a Coxeter group $W$. 
%If all of them are in the same Coxeter-isomorphism class, then $W$ is called \emph{rigid} (cf.~\S\ref{ss:cox-syst}).
If, for any two Coxeter generating sets $R,S$ of $W$, the Coxeter systems $\WS$ and $(W,R)$ are Coxeter-isomorphic, then $W$ is called \emph{rigid}. It is well known that there exist non-rigid Coxeter groups, e.g., for $n,m$ odd there are exotic isomorphisms
\begin{equation}\label{eq:exceptional-iso}
W(I_2(2m))\simeq W(I_2(m)\times A_1),\quad\text{ and }\quad W(B_n)\simeq W(D_n\times A_1).
\end{equation} 
There are standard procedures which realise exotic isomorphisms between Coxeter systems, e.g., Brady \emph{et al.} introduced the \emph{diagram twisting} (cf.~\cite[\S4]{brady-etal--rcgag} and \S\ref{s:rig-growth}), and Howlett and M\"uhlherr introduced a construction, the \emph{elementary reductions}, which deal with exotic isomorphisms $(W,S)\to (W,R)$ for which the set of reflections $S^W$ is different from $R^W$ (cf.~\cite{howlett-muehlherr--icgwdnpr}). Reductions generalise the exotic isomorphisms \eqref{eq:exceptional-iso}.

Several classes of Coxeter groups are known to be rigid or rigid up to diagram twisting. For instance, if any of the following conditions is satisfied for a Coxeter generating system $S$ of $W$, then $W$ is rigid up to diagram twisting (cf.~\cite{brady-etal--rcgag, bahls--ipcg,muehlherr--ipcg}).
\begin{renum}
\item $\WS$ is right-angled, i.e., $m_{s,r}\in \{2,\infty\}$ for all $s,r\in S$, $s\neq r$;
\item $\WS$ is infinite and $m_{s,r}<\infty$ for all $s,r\in S$;
\item $\WS$ can act faithfully, properly and cocompactly on a contractible manifold;
\item $\WS$ is skew-angled, i.e., $m_{s,r}\neq 2$ for all $s,r\in S$;
\item $\Gamma_\infty\WS$ is a tree, where $\Gamma_\infty$ is the variant of the Coxeter graph defined in \cite[Ch.~IV, \S1, Ex. 11]{bourbaki--gal46}.
\end{renum}

\subsection{Mutations of Coxeter groups}
\begin{dfn}\label{dfn:twist}
Let $M$ be a Coxeter matrix over $S$, and suppose that there exists a partition $S=X\sqcup Y \sqcup T \sqcup Z$ and a Coxeter-automorphism $\sigma$ of the subsystem $(W_X,X)$ satisfying
\begin{renum}
\item\label{li:TY} $m_{t,y}=\infty$ for all $t\in T$ and $y\in Y$,  
\item\label{li:ZY} $m_{z,y}<\infty$ for all $z\in Z$ and $y\in Y$, and
\item\label{li:ZX} for all $z\in Z$ and $x\in X$ one has $m_{z,\sigma(x)}=m_{z,x}$.
\end{renum}
Then, the $4$-tuple $(M,X,Y,\sigma)$ is called \emph{mutable}.
Associated with a mutable tuple $(M,X,Y,\sigma)$ there is a Coxeter matrix $\mu(M,X,Y,\sigma)=(n_{r,s})_{r,s\in S}$, its \emph{mutation}, given by 
\begin{equation}\label{eq:mutation}
n_{s,r}=n_{r,s}=\begin{cases}
m_{\sigma(r),s}& \text{ if } r\in X, s\in Y,\\
m_{\sigma(r),\sigma(s)}& \text{ if } r,s\in X,\\
m_{r,s}& \text{ otherwise.}
\end{cases}\end{equation}

If $(M,X,Y,\sigma)$ is mutable, then $(\mu(M,X,Y,\sigma), X,Y, \sigma^{-1})$ is mutable and it is called the \emph{inverse mutable} $4$-tuple since $\mu(\mu(M,X,Y,\sigma), X,Y, \sigma^{-1})=M$. 
The relation ``$N$ is a mutation of $M$'' is symmetric, and  therefore its transitive closure is an equivalence relation $\sim$ on Coxeter systems.
\end{dfn}
\begin{remark}\begin{renum}
\item The partition associated with a mutable tuple $(M,X,Y,\sigma)$ is determined by $X$, $Y$ together with conditions \ref{li:TY}--\ref{li:ZY}, and therefore $T,Z$ may be omitted from the notation.
\item Many Coxeter matrices $M$ only admit trivially mutable tuples, i.e., tuples with $\sigma=\iden_X$. 
Even when a non-trivial tuple exists, it may happen that the associated mutation is Coxeter-isomorphic to $M$. If this is not the case, $(M,X,Y,\sigma)$ is called \emph{effective}.
\item The operation of mutation is a generalisation of the diagram twisting (cf.~\cite{brady-etal--rcgag}). Diagram twists are  mutations satisfying the additional conditions (a) $W_X$ is finite, (b) $\sigma(x)=x^{w_0(X)}$ is the conjugation by the longest element of $W_X$, and (c) $m_{z,x}=2$ for all $z\in Z$ and $x\in X$. 
Effective diagram twists determine exotic isomorphisms of Coxeter groups.
\end{renum}
\end{remark}

\begin{thmx}\label{thm:mu}
Let $\WS$ be a Coxeter system with Coxeter matrix $M$, and let $(M,X,Y,\sigma)$ be a mutable tuple for $\WS$.
Let $N= \mu(M,X,Y,\sigma)$, and let $(W',S')$ be the Coxeter system with Coxeter matrix $N$. 

Then there is a bijection $\argu^\sharp\colon \spr=\spr\WS\to \spr(W',S')=\spr'$, such that $(W_I,I)$ is Coxeter-isomorphic to $(W'_{I^\sharp},I^\sharp)$ for all $I\in \spr$. 
Moreover, if $\WS \sim \WSp$ then
\begin{equation}\label{eq:pmutation}
p_{\WS}(t)=p_{(W',S')}(t). 
\end{equation}
\end{thmx}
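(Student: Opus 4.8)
\emph{Strategy.} The plan is to treat a single mutation $N=\mu(M,X,Y,\sigma)$ first: I would construct the bijection $\argu^\sharp$ together with the asserted Coxeter-isomorphisms on parabolics, and then obtain \eqref{eq:pmutation} as a formal consequence of Steinberg's formula (Prop.~\ref{pro:ps}); the case of a general $\sim$-equivalence follows by transitivity. The key preliminary observation is that the mutation alters $M$ only in the block indexed by $X\times Y$. Indeed, since $\sigma$ is a Coxeter-automorphism of $(W_X,X)$ one has $n_{r,s}=m_{\sigma(r),\sigma(s)}=m_{r,s}$ for $r,s\in X$, while every block of $N$ other than $X\times Y$ agrees with $M$ by the ``otherwise'' clause of \eqref{eq:mutation}. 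Thus $M$ and $N$ differ only in the entries $n_{x,y}=m_{\sigma(x),y}$ with $x\in X$, $y\in Y$.

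\emph{The bijection.} By condition \ref{li:TY} we have $m_{t,y}=\infty$, so a spherical residue $I\in\spr$ cannot meet both $T$ and $Y$ (else $W_I$ would contain an infinite dihedral subgroup). Hence every $I\in\spr$ satisfies $I\cap Y=\emptyset$ or $I\cap T=\emptyset$, and I define
\[
I^\sharp=\begin{cases} I & \text{if } I\cap Y=\emptyset,\\ \sigma^{-1}(I\cap X)\cup(I\setminus X) & \text{if } I\cap Y\neq\emptyset.\end{cases}
\]
When $I\cap Y=\emptyset$ the induced matrices coincide, $M_I=N_I$, so $\iden_I$ is a Coxeter-isomorphism $\WI\to (W'_I,I)$. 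When $I\cap Y\neq\emptyset$, so that $I\subseteq X\sqcup Y\sqcup Z$, I would verify that the map acting as $\sigma^{-1}$ on $I\cap X$ and as the identity on $(I\cap Y)\cup(I\cap Z)$ is a Coxeter-isomorphism $\WI\to (W'_{I^\sharp},I^\sharp)$: the $X\times X$ pairs use that $\sigma$ is a Coxeter-automorphism, the $X\times Y$ pairs are exactly the identity $n_{\sigma^{-1}(x),y}=m_{x,y}$, the $X\times Z$ pairs use condition \ref{li:ZX}, and all remaining pairs are unchanged. In particular $|I^\sharp|=|I|$ and $W'_{I^\sharp}$ is finite, so $I^\sharp\in\spr'$. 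Injectivity is clear (the two cases have disjoint images, distinguished by the intersection with $Y$), and the reverse map — the same recipe with $\sigma$ in place of $\sigma^{-1}$ — is legitimate because $(N,X,Y,\sigma^{-1})$ is again mutable, here using condition \ref{li:ZY}, and visibly inverts $\argu^\sharp$.

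\emph{The series identity.} Since $\WI$ and $(W'_{I^\sharp},I^\sharp)$ are Coxeter-isomorphic they have equal Poincar\'e polynomials $\pWI=p_{(W'_{I^\sharp},I^\sharp)}(t)$, and $|I^\sharp|=|I|$. Reindexing the sum \eqref{eq:sph-steinberg} over $\spr$ by the bijection $\argu^\sharp$ therefore yields
\[
\frac{1}{p_{\WS}(t^{-1})}=\sum_{I\in\spr}\frac{(-1)^{|I|}}{\pWI}=\sum_{J\in\spr'}\frac{(-1)^{|J|}}{p_{(W'_J,J)}(t)}=\frac{1}{p_{(W',S')}(t^{-1})},
\]
whence $p_{\WS}(t)=p_{(W',S')}(t)$ for a single mutation. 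Finally, as $\sim$ is the transitive closure of the single-mutation relation and equality of Poincar\'e series is transitive, \eqref{eq:pmutation} holds whenever $\WS\sim\WSp$.

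\emph{Main obstacle.} The delicate point is the construction of $\argu^\sharp$: one must see that the twist by $\sigma^{-1}$ is forced precisely on the residues meeting $Y$ while the identity is forced on those meeting $T$, so that the two cases assemble into a genuine bijection, and that each verification of the isomorphism condition $n_{\phi(a),\phi(b)}=m_{a,b}$ invokes exactly the right hypothesis among \ref{li:TY}--\ref{li:ZX} together with the Coxeter-automorphism property of $\sigma$. Once this bijection is established, the identity of Poincar\'e series is purely formal.
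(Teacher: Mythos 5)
Your proposal is correct and follows essentially the same route as the paper's proof: the same dichotomy on spherical residues (those meeting $Y$ versus those missing $Y$, forced by condition \ref{li:TY}), the same definition of $I^\sharp$ via $\sigma^{-1}$ on $I\cap X$, the same entry-by-entry verifications invoking the Coxeter-automorphism property and condition \ref{li:ZX}, invertibility via the $\sharp$-map of the inverse mutable tuple $(N,X,Y,\sigma^{-1})$, and the conclusion by reindexing Steinberg's formula \eqref{eq:sph-steinberg}. Nothing is missing, and the extension to $\sim$ by transitivity matches the paper's implicit reduction to a single mutation.
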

\begin{proof}
Let $S=X \sqcup Y \sqcup Z \sqcup T $ decompose as in Def.~\ref{dfn:twist}, and let $I\in\spr$. Since every edge of a spherical graph must have a finite label, then either 
\begin{itemize}
\item[(a)] $I\subseteq X\sqcup T \sqcup Z$, or 
\item[(b)] $I\subseteq X\sqcup Y\sqcup Z$ and $I\cap Y\neq \emptyset$.
\end{itemize}
Suppose that (a) holds, then define $I^\sharp=\{r^\sharp=r\mid r\in I\}$. By \eqref{eq:mutation}, for $r^\sharp, s^\sharp\in I^\sharp$ on has
\[n_{r^\sharp,s^\sharp}=n_{r,s}=\begin{cases} 
 m_{\sigma(r),\sigma(s)} &\text{ if } r,s\in X,\\
 m_{r,s} &\text{ if } r\in X, s\not\in X,\\
 m_{r,s} &\text{ if } r,s\not\in X.
\end{cases}\]
Since $\sigma$ is a Coxeter-automorphism of $(W_X,X)$, then $m_{\sigma(r),\sigma(s)}=m_{r,s}$ for $s,r\in X$.

\medskip
Suppose that (b) holds, then define $I^\sharp=\{r^\sharp \mid r\in I\}$, where now
\begin{equation}\label{eq:sharp}
r^\sharp=\begin{cases} 
\sigma^{-1}(r) &\text{ if } r\in X,\\
r &\text{ if } r\not\in X.\end{cases}
\end{equation}
Then, for $r^\sharp, s^\sharp\in I^\sharp$, by \eqref{eq:mutation}, \eqref{eq:sharp} and Def.~\ref{dfn:twist},~\ref{li:ZX}, one has
\[n_{r^\sharp,s^\sharp}=\begin{cases} 
 m_{\sigma(r^\sharp),\sigma(s^\sharp)}=m_{r,s} &\text{ if } r^\sharp,s^\sharp\in X ,\\
 m_{\sigma(r^\sharp),s^\sharp}=m_{r,s} &\text{ if } r^\sharp\in X, s^\sharp\in Y ,\\
 m_{r^\sharp,s^\sharp}=m_{\sigma^{-1}(r),s}=m_{r,s} &\text{ if } r^\sharp\in X, s^\sharp\in Z,\\
 m_{r^\sharp,s^\sharp}=m_{r,s} &\text{ if } r,s\not\in X.
\end{cases}\]

Hence, $N_{I^\sharp}$ and $M_I$ determine Coxeter-isomorphic systems. 
It follows that $I^\sharp\in \spr'$ and that (a)  holds for $I^\sharp$ if, and only if, (a) holds for $I$.
Thus, the map $I\mapsto I^\sharp$ is a map which preserves the Coxeter-isomorphism type, and it is invertible (its inverse being the $\sharp$-map associated to the inverse mutable tuple). 
The identity \eqref{eq:pmutation} then follows from Steinberg's formula \eqref{eq:sph-steinberg}.
\end{proof}

\begin{cor}\label{cor:mgr-cox}
Suppose that $W$ is rigid up to diagram twisting, and let $S,R$ be Coxeter generating systems for $W$ (cf.~\S\ref{ss:cox-gensyst}). Then 
\[p_{\WS}(t)=p_{(W,R)}(t)\quad \text{ and }\quad\omega\WS=\omega(W,R).\]
Let $p_{W,\textup{Cox}}(t)$ and $\omega_{\textup{Cox}}(W)$ be these common values.
\end{cor}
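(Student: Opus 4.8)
The plan is to reduce the statement entirely to Theorem~\ref{thm:mu}, using the observation recorded in the remark following Definition~\ref{dfn:twist} that every diagram twist is a special mutation. First I would unwind the hypothesis that $W$ is rigid up to diagram twisting. By definition this means that any two Coxeter generating systems $S$ and $R$ of $W$ are connected by a finite sequence of diagram twists, possibly interspersed with Coxeter-automorphisms. Since each diagram twist is a mutation, such a sequence exhibits precisely the relation $\WS\sim(W,R)$ for the mutation-equivalence $\sim$ introduced in Definition~\ref{dfn:twist}. The Coxeter-automorphisms occurring along the way produce Coxeter-isomorphic systems, which have identical Poincar\'e series, so they do not affect the conclusion.

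Second, I would invoke Theorem~\ref{thm:mu} directly: from $\WS\sim(W,R)$ it yields the equality of Poincar\'e series $p_{\WS}(t)=p_{(W,R)}(t)$. This is the substantive input, but it is already at our disposal, so this step is immediate once the first reduction is in place.

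Finally, for the growth rates I would recall from \S\ref{s:growth} that $\omega(G,X)$ equals the inverse of the radius of convergence of the power series $p_{(G,X)}(t)$ over $\CC$. Since equal rational functions have the same radius of convergence, the equality of Poincar\'e series forces $\omega\WS=\omega(W,R)$. These two common values are then what we denote $p_{W,\textup{Cox}}(t)$ and $\omega_{\textup{Cox}}(W)$, completing the proof.

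The one point genuinely requiring care, and the place where a loose argument could slip, is the first step: one must ensure that ``rigid up to diagram twisting'' is read as the assertion that all Coxeter generating systems of $W$ lie in a single $\sim$-class, and that the Coxeter-isomorphisms threaded through the twisting sequence are harmless. The latter is immediate, since Coxeter-isomorphic systems have identical Poincar\'e series, but it is worth stating \emph{explicitly} so that the bridge from ``connected by diagram twists'' to ``$\WS\sim(W,R)$'' is airtight. Beyond this bookkeeping there is no real obstacle: the corollary is a direct consequence of Theorem~\ref{thm:mu} together with the inclusion of diagram twists among mutations.
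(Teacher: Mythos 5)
Your proposal is correct and matches the paper's (implicit) argument: the corollary is stated without separate proof precisely because it follows from Theorem~\ref{thm:mu} once one notes that diagram twists are mutations, so rigidity up to diagram twisting places $\WS$ and $(W,R)$ in the same $\sim$-class (Coxeter-isomorphisms along the way being harmless), and the growth-rate equality follows since $\omega$ is the inverse of the radius of convergence of the Poincar\'e series. Your explicit attention to the bridge from ``connected by diagram twists'' to ``$\WS\sim(W,R)$'' is exactly the right bookkeeping.
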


Theorem \ref{thm:mu} implies that effective mutations which are not diagram twists can be regarded as procedures to produce non-isomorphic (and \emph{a fortiori}, non Coxeter-isomorphic) Coxeter systems with the same Poicar\'e series.

\begin{exa}Consider the rank-seven Coxeter system $\WS$ with Coxeter matrix
\[M=\begin{pmatrix}
1 & 3 & 3 & 2 & 3 & 4 & 2\\
3 & 1 & 3 & 2 & 3 & 4 & 2\\
3 & 3 & 1 & 2 & 2 & 4 & 3\\
2 & 2 & 2 & 1 & 3 & 3 & 2\\
3 & 3 & 2 & 3 & 1 & 2 & \infty\\
4 & 4 & 4 & 3 & 2 & 1 & 3\\
2 & 2 & 3 & 2 & \infty & 3 & 1
\end{pmatrix}.
\]

Let $X =\{s_1,s_2,s_3,s_4\}$, $Y=\{s_5\}$, $Z=\{s_6\}$, $T=\{s_7\}$, and let $\sigma=(1,2,3)$. 
Then $(M,X,Y,\sigma)$ is mutable, with mutation displayed in~Fig.~\ref{fig:mut}.
Moreover, $N=\mu(M,X,Y,\sigma)$ is a proper mutation, i.e., $N$ is not obtained from $M$ by diagram twisting. 

\begin{figure}[htbp]\begin{center}
\begin{tikzpicture}[auto,inner sep=0.5mm,
    vertex/.style={circle,draw=black,minimum size=4mm}]
  \node[vertex] (1) at (3,4) {$\scriptstyle s_1$};
  \node[vertex] (2) at (4,3) {$\scriptstyle s_2$};
  \node[vertex] (3) at (5,4) {$\scriptstyle s_3$};
  \node[vertex] (4) at (1.5,4) {$\scriptstyle s_4$};
  \node[vertex] (5) at (1,2) {$\scriptstyle s_5$};
  \node[vertex] (6) at (4,1) {$\scriptstyle s_6$};
  \node[vertex] (7) at (6,2) {$\scriptstyle s_7$};
  \draw (1) to (2);
  \draw (1) to (3);
  \draw (1) to (5);
  \draw (1) to node [pos=0.2,swap] {$\scriptstyle 4$} (6);
  \draw (2) to (3);
  \draw (2) to (5);
  \draw (2) to node [pos=0.3] {$\scriptstyle 4$} (6);
  \draw (3) to node[above,right] {$\scriptstyle 4$} (6);
  \draw (3) to (7);
  \draw (4) to (5);
  \draw (4) to (6);
  \draw (5) to node [pos=0.25,swap] {$\scriptstyle \infty$} (7);
  \draw (6) to (7);
  \node at (4,3.6) {$\scriptstyle\sigma$};
  \draw[ <-] (3.8,3.8) arc [start angle=135, end angle=405, x radius=3mm, y radius=2mm];
  \draw [->](6.5,2.5) -- node {$\mu$} (8,2.5);
\begin{scope}[xshift=7.5cm]
  \node[vertex] (1) at (3,4) {$\scriptstyle s_2$};
  \node[vertex] (2) at (4,3) {$\scriptstyle s_3$};
  \node[vertex] (3) at (5,4) {$\scriptstyle s_1$};
  \node[vertex] (4) at (1.5,4) {$\scriptstyle s_4$};
  \node[vertex] (5) at (1,2) {$\scriptstyle s_5$};
  \node[vertex] (6) at (4,1) {$\scriptstyle s_6$};
  \node[vertex] (7) at (6,2) {$\scriptstyle s_7$};
  \draw (1) to (2);
  \draw (1) to (3);
  \draw (1) to (5);
  \draw (1) to node [pos=0.2,swap] {$\scriptstyle 4$} (6);
  \draw (2) to (3);
  \draw (2) to (5);
  \draw (2) to node [pos=0.3] {$\scriptstyle 4$} (6);
  \draw (3) to node[above,right] {$\scriptstyle 4$} (6);
  \draw[bend left] (2) to (7);
  \draw (4) to (5);
  \draw (4) to (6);
  \draw (5) to node [pos=0.25,swap] {$\scriptstyle \infty$} (7);
  \draw (6) to (7);
\end{scope}
\end{tikzpicture}\end{center}
\caption{A proper mutation.}\label{fig:mut}
\end{figure}
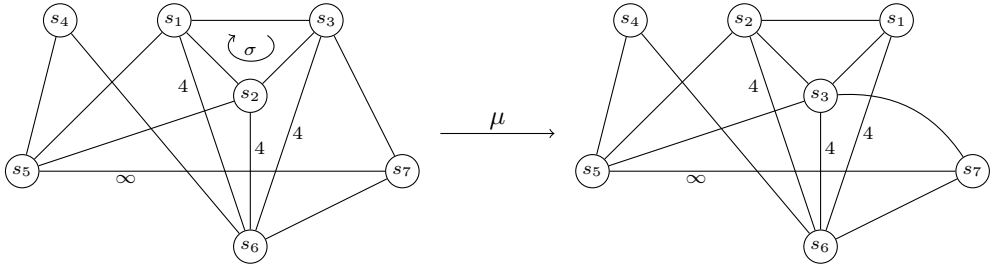
\end{exa}

\subsection{A conjecture}\label{ss:conj}
Consider the group $\PGL(2,\ZZ)\simeq (C_2\times C_2)\ast_{C_2}S_3$. It is well known that $\PGL(2,\ZZ)\simeq W$, where $(W,S)$ is the Coxeter system $\langle 2,3,\infty\rangle$ with Coxeter graph $\,\CoxGrHCI{}{\infty}\,$. Hence the minimal growth rate satisfies $\omega(\PGL(2,\ZZ))\leq \omega(W,S)=\alpha$, where $\alpha$ is the \emph{plastic number}, with minimal polynomial $m_\alpha(t)=t^3-t-1$. The converse inequality is proven by Bucher and Talambutsa (cf.~\cite[\S6]{bucher-talambutsa--egrfap}).

Therefore, the following problem seems to be of some interest.
\begin{conjx}\label{conj}
Let $W$ be a Coxeter group rigid up to diagram twisting, and let $\omega_{\textup{Cox}}(W)$ be defined as in Cor.~\ref{cor:mgr-cox}. 
Then $\omega(W)=\omega_{\textup{Cox}}(W)$.
\end{conjx}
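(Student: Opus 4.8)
The plan is to establish the two inequalities $\omega(W)\le\omega_{\textup{Cox}}(W)$ and $\omega(W)\ge\omega_{\textup{Cox}}(W)$ separately. The first is immediate from the definitions: by Corollary~\ref{cor:mgr-cox} every Coxeter generating system $S$ of $W$ gives the same value $\omega\WS=\omega_{\textup{Cox}}(W)$, and since $\omega(W)=\inf_X\omega(W,X)$ is taken over \emph{all} finite symmetric generating sets $X\subseteq W\setminus\{1\}$, choosing $X=S$ yields $\omega(W)\le\omega_{\textup{Cox}}(W)$. The entire content of the conjecture is therefore the reverse inequality $\omega(W,X)\ge\omega_{\textup{Cox}}(W)$ for \emph{every} finite symmetric generating set $X$: no generating set of $W$ may grow strictly more slowly than a Coxeter one.

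To attack this lower bound I would argue in two layers. For Coxeter groups that decompose, I would use the visual splittings of $W$ over its spherical parabolic subgroups, presenting $W$ as the fundamental group of a graph of groups with finite edge groups and Coxeter vertex groups of strictly smaller rank, and then invoke---and, where necessary, extend---the minimal-growth-rate results for free and amalgamated products due to Bucher and Talambutsa (cf.~\cite{bucher-talambutsa--egrfap}); their resolution of the prototype $W=\PGL(2,\ZZ)\cong\langle2,3,\infty\rangle$, where $\omega_{\textup{Cox}}(W)$ equals the plastic number $\alpha$, is the exact model to imitate. Coupled with an induction on the rank $|S|$, this would reduce the conjecture to the case of \emph{indecomposable} (one-ended) Coxeter groups, which admit no splitting over a finite subgroup.

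The main obstacle is precisely this indecomposable case, which already contains the hardest instances: the cocompact hyperbolic triangle group $\langle2,3,7\rangle$, whose Coxeter growth rate is Lehmer's number, and the system $E_{10}$ realising the extremal value $\tau$, are one-ended, so no free-product estimate is available and an \emph{intrinsic} lower bound on $\omega(W,X)$---uniform in $X$, and in a regime where $W$ need not even have rational growth with respect to $X$---is required. The natural source of such a bound is Tits' reflection representation $\rho$, and the coincidence recorded in Remark~\ref{rem:notes-tau}, that $E_{10}$ simultaneously minimises $\omega$ and McMullen's spectral invariant $\lambda_\rho$ (cf.~\cite{mcmullen--cgsnhm}), strongly hints at a link between growth and spectral data of $\rho$. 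The difficulty, however, is that $\lambda_\rho$ is a \emph{single-element} quantity---the least spectral radius among elements of $W$---while the growth rate measures the \emph{number} of elements of each $X$-length; converting control of individual spectral radii on the Tits cone into a lower bound for the cardinality of $X$-balls is exactly the mechanism that is missing. I expect this passage from a spectral to a counting estimate to be the genuinely hard step, and it is presumably why the statement remains only a conjecture.
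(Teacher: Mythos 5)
You have not proved anything here, and in fairness you could not have: the statement in question is Conjecture~\ref{conj}, which the paper itself leaves open --- there is no proof in the paper to compare against. Your first inequality is correct and essentially trivial: by Cor.~\ref{cor:mgr-cox} (which is where the rigidity hypothesis enters, making $\omega_{\textup{Cox}}(W)$ well defined) every Coxeter generating set realises the value $\omega_{\textup{Cox}}(W)$, so $\omega(W)\leq \omega_{\textup{Cox}}(W)$ by definition of the infimum. The entire content of the conjecture is the reverse bound $\omega(W,X)\geq \omega_{\textup{Cox}}(W)$ uniformly over all finite symmetric generating sets $X$, and your proposal, by your own admission in its final sentence, does not supply the mechanism for it. Identifying the missing step (converting spectral control of $\rho(w)$ for single elements into a counting estimate for $X$-balls) is a reasonable diagnosis, but a diagnosis is not a proof, so the proposal has an unfilled gap exactly where the conjecture is hard.

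Two further cautions about the reduction you sketch. First, the graph-of-groups layer is shakier than you present it: Bucher--Talambutsa's treatment of $\PGL(2,\ZZ)$ is a bespoke analysis of one amalgam $(C_2\times C_2)\ast_{C_2}S_3$, and there is no general theorem computing, or even bounding below, the minimal growth rate of an amalgamated product with finite edge groups in terms of data of the factors; ``extend where necessary'' conceals an open problem of comparable difficulty to the conjecture itself. Second, the induction on rank does not obviously close: even if $W$ splits visually over spherical parabolics, the vertex groups need not inherit the rigidity hypothesis, so $\omega_{\textup{Cox}}$ of a vertex group may not be well defined, and the inductive statement you would need is not the conjecture itself but some stronger, as-yet-unformulated assertion. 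So the proposal should be read as a plausible research programme with two genuinely missing ingredients (the amalgam estimate and the spectral-to-counting passage), not as a proof of the statement --- which is consistent with the paper, where the statement is posed precisely because no such proof is known.
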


\begin{remark}
  \begin{renum}
  \item If $W$ is a product of spherical and affine irreducible Coxeter systems, its Poincar\'e series depends on the chosen generating set. However, the minimal growth rate and the growth rate coincide $\omega(W)=\omega(W,S)$ and their common value is either $0$ or $1$, depending on the finiteness of the group only. 
  \item The rigidity hypothesis in Conj.~\ref{conj} cannot be relaxed since, in general, elementary reductions do not preserve the growth rate, as the following example shows. Let 
\[M=\begin{pmatrix}
1 & 3 & 2 & 3 & \infty\\
3 & 1 & 2 & 2 & 2\\
2 & 2 & 1 & 3 & 2\\
3 & 2 & 3 & 1 & 4\\
\infty & 2 & 2 & 4 & 1
\end{pmatrix},\quad\quad \Gamma(M)=\,\,
\raisebox{-24pt}{\begin{tikzpicture}[auto,inner sep=0.5mm,
    vertex/.style={circle,draw=black,minimum size=4mm}]
  \node[vertex] (1) at (0,0) {$\scriptstyle s_1$};
  \node[vertex] (2) at (0,1.5) {$\scriptstyle s_2$};
  \node[vertex] (3) at (3,0) {$\scriptstyle s_3$};
  \node[vertex] (4) at (1.5,0) {$\scriptstyle s_4$};
  \node[vertex] (5) at (1.5,1.5) {$\scriptstyle s_5$};
  \draw (1) to (2);
  \draw (1) to (4);
  \draw (1) to node {$\scriptstyle \infty$} (5);
  \draw (3) to (4);
  \draw (4) to node[above,right] {$\scriptstyle 4$} (5);
\end{tikzpicture}}.
\]
Then $s_5$ is a pseudo-transposition, corresponding to the parabolic subsystem of type $B_3$ generated by $J=\{s_3,s_4,s_5\}$. 
Let $r_i=s_i$ for $i \in \{1,\dots,4\}$, let $r_5=s_5s_4s_5$ and let $r_6=w_0(J)=s_3s_4s_3s_5s_4s_3s_5s_4s_5$ be the longest element of the parabolic subsystem $(W_J,J)$. Then, $R=\{r_i\mid i\in \{1,\dots,6\}\,\}$ is a Coxeter generating system for $W(M)$ (cf.~\cite{howlett-muehlherr--icgwdnpr}). Its Coxeter matrix $M'=M(R)$ is 
\[M'=\begin{pmatrix}
1 & 3 & 2 & 3 & \infty & \infty\\
3 & 1 & 2 & 2 & 2 & 2\\
2 & 2 & 1 & 3 & 3 & 2\\
3 & 2 & 3 & 1 & 2 & 2\\
\infty & 2 & 3 & 2 & 1 & 2\\
\infty & 2 & 2 & 2 & 2 & 1
\end{pmatrix},\quad\quad \Gamma(M')=\,\,
\raisebox{-24pt}{\begin{tikzpicture}[auto,inner sep=0.5mm,
    vertex/.style={circle,draw=black,minimum size=4mm}]
  \node[vertex] (1) at (1.5,0) {$\scriptstyle r_1$};
  \node[vertex] (2) at (1.5,1.5) {$\scriptstyle r_2$};
  \node[vertex] (3) at (4.5,0) {$\scriptstyle r_3$};
  \node[vertex] (4) at (3,0) {$\scriptstyle r_4$};
  \node[vertex] (5) at (3,1.5) {$\scriptstyle r_5$};
  \node[vertex] (6) at (0,0) {$\scriptstyle r_6$};
  \draw (1) to (2);
  \draw (1) to (4);
  \draw (1) to node  {$\scriptstyle \infty$} (5);
  \draw (1) to node [swap] {$\scriptstyle \infty$} (6);
  \draw (3) to (4);
  \draw (4) to node[above,right] {$\scriptstyle 4$} (5);
\end{tikzpicture}}.
\]
By direct computation one sees that $\omega(W,S)= 2.24167\dots$, while $\omega(W,R)=2.61578\dots$.
  \end{renum}
\end{remark}

%%%%%%%%%%%%%%%%%%%%%%%%%%%%%%%%%%% 
%% References
\bibliographystyle{amsalpha}
\bibliography{all-references,terragni-references}

\end{document}